\crefname{observation}{Observation}{Observations}
\Crefname{observation}{Observation}{Observations}
\crefname{conjecture}{Conjecture}{Conjectures}
\Crefname{conjecture}{Conjecture}{Conjectures}
\crefname{notation}{Notation}{Notations}
\Crefname{notation}{Notation}{Notations}
\pgfplotsset{compat=1.16}
\title{Coverage of the unit cube by dynamic Boolean models}
\author{Hanna Döring
\thanks{Osnabrück University, Germany, hanna.doering@uni-osnabrueck.de} \and Lianne de Jonge\thanks{Osnabrück University, Germany, lianne.de.jonge@uni-osnabrueck.de} \and Xiaochuan Yang\thanks{xiaochuan.j.yang@gmail.com}}
\date{\today}
\theoremstyle{definition}
\newtheorem{defn}{Definition}[section]
\theoremstyle{plain}
\newtheorem{prop}[defn]{Proposition}
\newtheorem{rem}[defn]{Remark}
\newtheorem{thm}[defn]{Theorem}
\newtheorem{lem}[defn]{Lemma}
\newtheorem{cond}[defn]{Condition}
\renewcommand{\l}{\left}
\renewcommand{\r}{\right}
\DeclarePairedDelimiter\ceil{\lceil}{\rceil}
\DeclareMathOperator{\sgn}{sgn}
\newcommand{\ind}{\mathds{1}}
\newcommand{\indi}[1]{\ind_{\l\{#1\r\}}}
\newcommand{\E}{\mathbb{E}}
\newcommand{\N}{\mathbb{N}}
\renewcommand{\P}{\mathbb{P}}
\newcommand{\Q}{\mathbb{Q}}
\newcommand{\R}{\mathbb{R}}
\renewcommand{\S}{\mathbb{S}}
\newcommand{\cC}{\mathcal{C}}
\newcommand{\cI}{\mathcal{I}}
\newcommand{\cO}{\mathcal{O}}
\newcommand{\cS}{\mathcal{S}}
\newcommand{\cW}{\mathcal{W}}
\newcommand{\cX}{\mathcal{X}}
\renewcommand{\epsilon}{\varepsilon}
\renewcommand{\phi}{\varphi}
\begin{document}

\maketitle
\begin{abstract}
    Motivated by peer-to-peer telecommunication, we study a dynamic Boolean model. We define a Poisson number of random lines through the $(d-1)$-dimensional base of a $d$-dimensional unit cube and dilate them to define cylinders. Letting $\rho$ be the expected number of cylinders, the random variable of interest is the coverage radius $R_\rho$, which is the cylinder radius required to cover the $d$-dimensional unit cube. We show that $R_\rho^{d-1}$ is of the order $\log \rho / \rho$ with high probability as $\rho$ tends to infinity.
    
    We also consider alternative dynamics resulting in generalized cylinders that are generated by dilating the trajectories of stochastic processes, in particular Brownian motions. This leads to a coverage radius of the same order.
\end{abstract}

\noindent \textsc{Keywords.} Poisson cylinders, Poisson point process, limit theorems, coverage problems
\noindent \textsc{MSC classification.} 
60F05, 
60D05, 
60K35. 

\section{Introduction}
\label{sec:introduction}
Random coverage problems have been studied extensively in many different settings. In this type of problem, there is a (possibly random) number of random sets $A_i$ and the question is whether some fixed set $A$ is contained in the union of the $A_i$'s. In an early paper \cite{stevens_solution_1939} from 1937, Stevens studied coverage of a circle by randomly placed arcs $A_i$ of length $x$. Since then, there have been many people studying some variation of the problem, see for example \cite{dvoretzky1956} and \cite{janson_random_1986}.

We study a variation on the coverage problem from \cite{penrose2021}, where Penrose defined the \emph{coverage radius} for the Boolean model to be the radius of balls in the Boolean model needed to cover a set. However, instead of covering a set by balls, we are interested in covering the unit cube $[0,1]^d$ by cylinders.

The study of cylinder models is motivated by peer-to-peer telecommunication, where it can be considered an extension of the Boolean model. 
In this model, the positions of people are represented by Poisson distributed points in $\R^2$. They carry phones that have a communication radius represented by a disk around each point and messages can be transmitted between phones when these disks overlap. This can be modeled by the Boolean model or the Gilbert graph as introduced in \cite{gilbert1961}.
Since people move around, we assign a velocity vector to each point and let it move in a single direction. Adding time as a third dimension, the disks' movements are represented by cylinders. The asymptotic properties of the volume covered by such cylinders were studied by \cite{aschenbruck2021}.

Based on this motivation, we consider a cylinder model in $\R^d$ consisting only of cylinders crossing the base of the unit cube, $[0,1]^{d-1}\times\{0\}$. The cylinders are formed by constructing a Poisson number of lines in random directions crossing $[0,1]^{d-1}\times\{0\}$. For some bounded set $K$ and a scaling parameter $r$, the Minkowski sum with $rK$ thickens the line into cylinders. See Figure \ref{fig:cylinders} for a picture of this situation in two dimensions with the unit ball $K$.
\begin{figure}
    \centering
    \includegraphics[width = .4\textwidth]{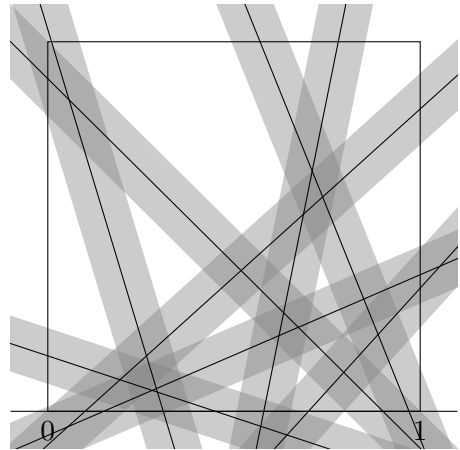}
    \caption{Cylinder model in $\R^2$ with $K=B_2(o,r)$. The lines crossing $[0,1]\times\{0\}$ are dilated to create the cylinders.}
    \label{fig:cylinders}
\end{figure}

If $r$ is large enough, the cylinders cover the unit cube, provided that at least one cylinder exists. We are interested in the asymptotics of the coverage radius: letting the intensity of $\rho$ of the number of cylinders tend to infinity, how wide should the cylinders be to cover $[0,1]^d$? In the context of telecommunication described above, we ask how large the communication radius has to be so that people starting in $[0,1]^{d-1}$ at time zero cover $[0,1]^{d-1}$ until time one.

We should note that the cylinders in this model are  different from the homogeneous Poisson cylinders that are formed by thickening the Poisson line sets introduced by \cite{goudsmit1945}. In a two-dimensional setting, the coverage radius of homogeneous cylinder sets has been studied by \cite{chenavier2016}. A higher-dimensional coverage result with a time-component can be found in \cite{broman2019random}.

For the application to telecommunication, it is more realistic to consider the possibility that people can change the direction in which they walk. In \cite{vandenberg1997} and \cite{peres_mobile_2013}, the movements are modelled by a Brownian motion. Based on this, we consider an alternative model where the cylinders are generated using a Minkowski sum with a Brownian motion instead of a line. We refer to these models where the lines are replaced by other stochastic processes as \emph{generalized cylinder models}.

In what follows, we first formally define the (generalized) cylinder model and state the results. In \cref{sec:coverage uniform}, we prove a law of large numbers for the coverage radius for uniformly distributed cylinder directions and $K$ equal to the unit ball. In \cref{sec:generalization}, we prove results for the cylinders constructed using lines or Brownian motions. The results for the cylinders are generalized to different directional distributions and sets $K$. For the Brownian motion, we consider $K$ to be the $(d-1)$-dimensional unit ball.

\subsection{Definitions and results}
\label{sec:definitions}
Let $\eta_\rho$ be a homogeneous Poisson point process on $\R^{d-1}$ with intensity measure $\rho \lambda_{d-1}$, where $\lambda_{d-1}$ denotes the $(d-1)$-dimensional Lebesgue measure and $\rho>0$. The point measure $\eta_\rho$ can be written in terms of its support as follows:
\begin{equation*}
    \eta_\rho = \sum_{i=1}^\infty \delta_{Y^{(i,\rho)}},
\end{equation*}
where $Y^{(i,\rho)}\in \R^{d-1}$ for all $i\in\N$. By a standard abuse of notation, we identify a point process with its support and write $\eta_\rho=\{Y^{(1,\rho)},Y^{(2,\rho)},\dots\}$. The points are the starting point of independent and continuous-time stochastic processes
\begin{equation*}
    \{X_t^{(i,\rho)}\colon t\in\R_+,\, X_0^{(i,\rho)}=Y^{(i,\rho)}\times \{0\}\}\subset \R^{d-1}
\end{equation*}
that represent the movement of the points through time. We require that the stochastic processes $\{X_t^{(i,\rho)}-X_0^{(i,\rho)}\}_{t\in\R_+}$ are identically distributed for all $i\in\N$, so that for all $t'\in\R_+$ the set $\{X_{t'}^{(1,\rho)}, X_{t'}^{(2,\rho)}, X_{t'}^{(3,\rho)},\dots\}$ forms the support of a homogeneous Poisson point process. We denote the trajectory of $X_t^{(i,\rho)}$ by
\begin{equation*}
    X^{(i,\rho)} \coloneqq \{(X_t^{(i,\rho)},t) \colon t\in \R_+\}\subset \R^d.
\end{equation*}

We are specifically interested in the trajectories through $[0,1]^{d-1}\times\{0\}$. Let $\xi_\rho=\eta_\rho([0,1]^{d-1})$ be the Poisson distributed number of points in $[0,1]^{d-1}$ and assume that the points in $\eta_\rho$ are indexed in such a way that $X_0^{(1,\rho)},X_0^{(2,\rho)},\dots,X_0^{(\xi_\rho,\rho)}\in [0,1]^{d-1}$. We then define
\begin{equation*}
    \cX_\rho(K) \coloneqq \bigcup_{i=1}^{\xi_\rho} X^{(i,\rho)}\oplus K.
\end{equation*}
Since this paper is motivated by cylinder sets, we refer to this set as the generalized cylinder set.

The set $K$ can be scaled by a positive parameter $r$, in which case $r K = \{r x \colon x\in K\}$. We are interested in how we need to scale $K$ to cover the entire unit cube by the generalized cylinder set. We therefore define the coverage radius by
\begin{equation*}
    R_\rho (K) = \inf\{r\in\R_+\colon \cX_\rho (rK)\supset [0,1]^d\}
\end{equation*}
analogous to the coverage threshold for the Boolean model as defined in \cite{penrose2021}.

In this paper, we are mainly interested in two types of generalized cylinder sets: cylinders generated by lines and ones where the stochastic processes are Brownian motions. Below, we define these two sets and state the results.

\paragraph{Cylinder sets}
Cylinders in $\R^d$ are generated by dilating a line. Let $S^{(i,\rho)}$, $i\in\N$, be independent, identically distributed random variables taking values on the unit hemisphere $\S^{d-1}_+ = \{(s_1,\dots,s_d)\in\R^d\colon s_1^2+\dots+s_d^2 = 1,\, s_d\geq 0\}$. We denote the probability measure of $S^{(i,\rho)}$ by $\Q$. Then, by choosing
\begin{equation}
    X_t^{(i,\rho)} = X_0^{(i,\rho)} + t S^{(i,\rho)}
    \label{eq:line process}
\end{equation}
each stochastic process defines a random line. Depending on the choice of $K$, these lines can be dilated to form cylinders of different shapes.

Let $B_d(o,r)$ be a $d$-dimensional ball with radius $r$ and $B'_{d-1}(o,r)\coloneqq B_{d-1}(o,r)\times \{0\}\subset \R^{d-1}\times \{0\}$ a $(d-1)$-dimensional ball embedded in $\R^d$. Then we define
\begin{align*}
    \cC_{\rho}(r) &\coloneqq \cX_\rho(B_d(o,r)) , \qquad \text{and}\\
    \cC'_{\rho}(r) &\coloneqq \cX_\rho(B'_{d-1}(o,r))
\end{align*}
to be the cylinder sets based on the choice of $X_t^{(i,\rho)}$ in \eqref{eq:line process}. The set $\cC_{\rho}(r)$ contains every point within Euclidean distance $r$ of the lines. The cylinders in $\cC'_{\rho}(r)$ correspond to the ones in the telecommunication model.

Let $\sigma_{d-1}$ be the normalised spherical measure, i.e. $\sigma_{d-1}(\S^{d-1}) = 1$. When the directional distribution is uniform on $\S_+^{d-1}$, the probability measure is $\Q = 2\sigma_{d-1}$. The $(d-1)$-dimensional volume of $\S^{d-1}_+$ embedded in $\R^d$ equals $\frac{d}{2}\kappa_d$, where $\kappa_d$ is the $d$-dimensional volume of the unit ball. The volume of some measurable $A\subset\S^{d-1}_+$ is therefore $d\kappa_d\sigma_{d-1}(A)$.

For $K = B_d(o,1)$ and uniformly distributed cylinder directions, we show the following convergence for the coverage radius:
\begin{thm}[Coverage using uniform angle distribution] 
    Let $\Q=2\sigma_{d-1}$ be the uniform probability measure on $\S^{d-1}_+$ and $K = B_d(o,1)$. Define
    \begin{equation*}
        \phi_d(x) = \int_{[0,1]^{d-1}}\frac{1}{\|y\times\{0\} - x\|^{d-1}} \dd y
    \end{equation*}
    for $x\in [0,1]^{d-1} \times (0,1]$. Then,
    \begin{equation*}
        \frac{R_\rho(B_d(o,1))}{\l(\log{\rho}/\rho \r)^{1/(d-1)}} \overset{\P}{\longrightarrow} \l(\frac{d^2}{d-1}\cdot \frac{\kappa_d}{2\kappa_{d-1}} \cdot \frac{1}{\inf_{x\in[0,1]^d} \phi_d(x)}\r)^{1/(d-1)} \eqqcolon (c^*)^{1/(d-1)}
    \end{equation*}
    as $\rho$ tends to infinity.
    \label{thm:coverage uniform}
\end{thm}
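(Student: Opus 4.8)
The plan is to translate coverage into a first- and second-moment analysis of the uncovered set, after an exact computation of the mean number of cylinders through a point. Fix $x=(x',x_d)\in[0,1]^{d-1}\times(0,1]$. The marks $(Y^{(i,\rho)},S^{(i,\rho)})$ with $Y^{(i,\rho)}\in[0,1]^{d-1}$ form a Poisson process on $[0,1]^{d-1}\times\S^{d-1}_+$ with intensity $\rho\,(\lambda_{d-1}\vert_{[0,1]^{d-1}})\otimes\Q$, and $x\in\cC_\rho(r)$ exactly when some atom $(y,s)$ has its ray $\ell_{y,s}=\{(y,0)+ts:t\ge0\}$ within distance $r$ of $x$. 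The Poisson void probability then gives
\[
\P\l(x\notin\cC_\rho(r)\r)=e^{-\mu_\rho(x,r)},\qquad \mu_\rho(x,r)\coloneqq\rho\int_{\S^{d-1}_+}\int_{[0,1]^{d-1}}\indi{\operatorname{dist}(x,\ell_{y,s})\le r}\,\dd y\,\Q(\dd s).
\]
Everything hinges on the asymptotics of this mean, so the first task is to prove, uniformly on the region where the minimum below is attained,
\[
\mu_\rho(x,r)=\frac{2\kappa_{d-1}}{d\kappa_d}\,\rho\,r^{d-1}\,\phi_d(x)\,(1+o(1)),\qquad r\to0.
\]

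To obtain this I would fix $s$ with $s_d>0$ and project $\R^d$ orthogonally along $s$ onto $s^\perp$. The starting points $y$ whose ray lies within distance $r$ of $x$ are exactly those for which the projection of $(y,0)$ falls in the $(d-1)$-ball of radius $r$ about the projection of $x$; since projecting the base hyperplane $\R^{d-1}\times\{0\}$ onto $s^\perp$ rescales $(d-1)$-volume by $\langle e_d,s\rangle=s_d$, the inner integral equals $\kappa_{d-1}r^{d-1}/s_d$ up to an $O(r)$ boundary correction (negligible, as $s_d$ stays bounded below for $y\in[0,1]^{d-1}$ and $x_d>0$). There remains
\[
\mu_\rho(x,r)\sim\rho\,\kappa_{d-1}r^{d-1}\int_{\S^{d-1}_+}\frac{\indi{y^\ast(s)\in[0,1]^{d-1}}}{s_d}\,\Q(\dd s),
\]
where $y^\ast(s)$ is the foot of the ray through $x$. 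The substitution $s=(x-(y,0))/\|x-(y,0)\|$, the radial (gnomonic) map of the hyperplane $\{z_d=x_d\}$ onto $\S^{d-1}$, carries $\Q=2\sigma_{d-1}$ to $\tfrac{2}{d\kappa_d}\,\tfrac{x_d}{R^{d}}\,\dd y$ with $R=\|(y,0)-x\|$ (solid-angle Jacobian $\cos\psi/R^{d-1}$, $\cos\psi=x_d/R$). Since $1/s_d=R/x_d$, the integrand collapses to $\tfrac{2}{d\kappa_d}R^{-(d-1)}$, and integrating over $y\in[0,1]^{d-1}$ reproduces exactly $\tfrac{2}{d\kappa_d}\phi_d(x)$, yielding the claimed asymptotic.

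For the upper bound I would discretise and take a union bound. Tile $[0,1]^d$ into $\Theta(\delta^{-d})$ cells of side $\delta$; if every cell centre is covered by a ray at distance at most $r-\delta\sqrt d$, then radius $r$ covers the whole cube, so
\[
\P\l(\cC_\rho(r)\not\supset[0,1]^d\r)\le\Theta(\delta^{-d})\,\max_{x\in[0,1]^d}e^{-\mu_\rho(x,\,r-\delta\sqrt d)}=\Theta(\delta^{-d})\,\exp\l(-\tfrac{2\kappa_{d-1}}{d\kappa_d}\rho r^{d-1}\inf_{x}\phi_d(x)\,(1+o(1))\r),
\]
the infimum being finite, positive, and attained away from the base, where $\phi_d$ diverges. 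Taking $\delta=r/\log\rho$ gives $\log(\delta^{-d})=\tfrac{d}{d-1}\log\rho\,(1+o(1))$, so the bound vanishes as soon as $\tfrac{2\kappa_{d-1}}{d\kappa_d}\rho r^{d-1}\inf_x\phi_d>\tfrac{d}{d-1}\log\rho$, i.e. $r^{d-1}>c^\ast\log\rho/\rho$. This is where the constant is fixed: $c^\ast$ comes from the mean $\mu_\rho$ together with $\inf_x\phi_d$, and the factor $d/(d-1)$ from the grid cardinality $\delta^{-d}\asymp(\rho/\log\rho)^{d/(d-1)}$.

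For the matching lower bound, let $x_1,\dots,x_M$ be an $r$-spaced grid in $[0,1]^d$ and $W=\sum_j\indi{x_j\notin\cC_\rho(r)}$. The mean asymptotics give $\E[W]\asymp r^{-d}\int_{[0,1]^d}e^{-\mu_\rho(x,r)}\,\dd x$, which by Laplace's method at the minimiser of $\phi_d$ equals $r^{-d}\exp(-\tfrac{2\kappa_{d-1}}{d\kappa_d}\rho r^{d-1}\inf_x\phi_d)$ up to polynomial factors and hence diverges precisely when $r^{d-1}<c^\ast\log\rho/\rho$. It remains to show $W>0$ with high probability through the Paley--Zygmund inequality. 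The Poisson structure yields the exact identity
\[
\P\l(x_j\notin\cC_\rho(r),\,x_k\notin\cC_\rho(r)\r)=e^{-\mu_\rho(x_j,r)-\mu_\rho(x_k,r)+\mu_2(x_j,x_k)},
\]
with $\mu_2(x_j,x_k)$ the mean number of rays covering both points, so that $\Var(W)=\sum_{j,k}e^{-\mu_\rho(x_j,r)-\mu_\rho(x_k,r)}\l(e^{\mu_2(x_j,x_k)}-1\r)$. The crux, and the step I expect to be the main obstacle, is controlling these correlations: because cylinders are long, one ray may cover two distant test points, so $\mu_2$ does not vanish off the diagonal. I would estimate $\mu_2(x_j,x_k)$ — a double integral of the same geometric type as $\mu_\rho$, now restricted to rays passing near both points — split the sum by the distance $\|x_j-x_k\|$, and use the $r$-separation to show that the near-diagonal terms are $o(\E[W]^2)$ while far pairs have $\mu_2\to0$. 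This yields $\Var(W)=o(\E[W]^2)$, hence $\P(W>0)\to1$, which together with the upper bound identifies the limit as $(c^\ast)^{1/(d-1)}$.
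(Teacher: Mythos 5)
Your proposal is correct and follows essentially the same route as the paper: the same Poisson void-probability computation giving the mean $\tfrac{2\kappa_{d-1}}{d\kappa_d}\rho r^{d-1}\phi_d(x)$ (you via a gnomonic change of variables, the paper via spherical-cap areas), a union bound over a fine grid contributing the entropy factor $\tfrac{d}{d-1}\log\rho$ for the upper bound, and a second-moment/Paley--Zygmund argument with the correlation sum split at an intermediate distance scale for the lower bound. The only differences are cosmetic (discrete count of uncovered grid points versus uncovered volume of the near-minimal set $D_\epsilon$; $\delta=r/\log\rho$ cells versus an $\epsilon\to 0$ limit), and your deferred estimate of $\mu_2$ for well-separated pairs is exactly the step the paper likewise handles by the bound $\mu_2=\cO\l((\log\rho)\,\rho^{-\alpha(d-1)}\r)\to 0$.
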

The function $\phi_d(x)$ follows from the probability that $x$ is covered by a cylinder. It takes its lowest values in the corners of $[0,1]^{d-1}\times\{1\}$. The infimum over all values of $\phi_d$ implies that the constant is dependent on the part of $[0,1]^d$ that is most difficult to cover.

For the extension of this result to other sets $K$ and directional distributions, it is 
natural to introduce a condition on the angle distribution such that the entire cube can be reached by the lines. Let $(s_1,s_2,\dots,s_d) \eqqcolon s\in\S^{d-1}_+$ be a point on the unit hemisphere.
Let $\sgn\colon \R\to\{-1,0,1\}$ denote the sign function. For $(z_1,\dots,z_{d-1})\in \{-1,1\}^{d-1}$, we then define
\begin{equation*}
    \cS(z_1,\dots,z_{d-1}) \coloneqq \{(s_1,\dots,s_d)\in\S_+^{d-1} \colon\, \sgn(s_1)=z_1,\,\dots,\, \sgn(s_{d-1})=z_{d-1},\, s_d\geq 2/\sqrt{5}\}.
    \label{eq:direction set}
\end{equation*}
Denoting the closure of a set $A$ by $\overline{A}$, we formulate the following condition:
\begin{cond}
    The directional random variables $S_i$ are independently and identically distributed such that
    \begin{equation*}
        \P\l(S_i\in \overline{\cS\l(z_1,\dots,z_{d-1}\r)}\r) > p_S
    \end{equation*}
    for all $(z_1,\dots,z_{d-1})\in \{-1,1\}^{d-1}$ and some constant $p_S>0$.
    \label{cond:distribution S 2}
\end{cond}
Loosely speaking, this condition says that cylinders should have a positive probability of being vertical enough in every direction.
Note that this condition is for example satisfied for uniformly distributed directions.

For more general compact sets $K$ and distributions $\Q$, the convergence to some interval is shown.
\begin{thm}[Coverage of unit cube]
    Let $K\subset\R^d$ be a bounded set containing $B'_{d-1}(o,r)$ for some $r>0$ and let $d\geq 2$. When Condition \ref{cond:distribution S 2} is satisfied, there exist constants $c_1$ and $c_2$ depending on $K$ such that
    \begin{equation*}
        \P\Big(c_1\leq \frac{R_\rho (K)}{\l(\log{\rho}/\rho \r)^{1/(d-1)}} \leq c_2\Big)\to 1
    \end{equation*}
    as $\rho$ tends to infinity.
    \label{thm:coverage}
\end{thm}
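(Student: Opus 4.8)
The plan is to derive both bounds from monotonicity of the coverage radius in the grain $K$, reducing the problem to two canonical estimates under \cref{cond:distribution S 2}. Since $K$ is bounded, $K\subseteq B_d(o,\rho_K)$ for some finite $\rho_K$, and by hypothesis $B'_{d-1}(o,r_0)\subseteq K$ for some $r_0>0$. Because $\cX_\rho$ is monotone under inclusion of the grain and $rB'_{d-1}(o,r_0)=B'_{d-1}(o,rr_0)$, $rB_d(o,\rho_K)=B_d(o,r\rho_K)$, these inclusions give
\begin{equation*}
    \frac{1}{\rho_K}\,R_\rho\l(B_d(o,1)\r)\le R_\rho(K)\le \frac{1}{r_0}\,R_\rho\l(B'_{d-1}(o,1)\r).
\end{equation*}
Thus it suffices to prove, under \cref{cond:distribution S 2}, an upper bound of the correct order for the flat-disk cylinders $\cC'_\rho$ and a lower bound of the correct order for the solid-ball cylinders $\cC_\rho$; the constants $c_1,c_2$ then absorb $\rho_K$ and $r_0$. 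Note that the precise limit in \cref{thm:coverage uniform} is available only for the uniform law, so both canonical estimates must be redone for general $\Q$.

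For the upper bound I would run a first-moment argument on a grid. Fix $r=r(\rho)$ with $r^{d-1}=c_2^{d-1}\log\rho/\rho$ and tile $[0,1]^d$ by $O(r^{-d})$ cells of side $\asymp r$. A point $x=(x',x_d)$ is covered by the flat disk of radius $r$ carried by the $i$-th line exactly when its position at height $x_d$, namely $Y^{(i)}+(x_d/s_d^{(i)})s'^{(i)}$, lies within $r$ of $x'$; hence the number of covering cylinders is Poisson with mean $\rho$ times the measure of admissible pairs $(Y,S)$. Restricting $S$ to the octant of \cref{cond:distribution S 2} that points from $x'$ into the interior of the base and using $s_d\ge 2/\sqrt5$ (which keeps the drift $(x_d/s_d)\|s'\|\le \tfrac12$), the admissible $Y$ fill a fixed fraction of a $(d-1)$-ball of radius $r$ inside $[0,1]^{d-1}$, so this mean is at least $c\,\rho r^{d-1}=c\,c_2^{d-1}\log\rho$ uniformly in $x$. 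Choosing $c_2$ large makes the probability that a given grid point is uncovered at most $\rho^{-c\,c_2^{d-1}}$, and a union bound over the $O(r^{-d})=O((\rho/\log\rho)^{d/(d-1)})$ cells is $o(1)$ once $c\,c_2^{d-1}>d/(d-1)$. A standard margin argument (covering the cell centre at radius $r/2$ forces coverage of the whole cell at radius $r$) upgrades grid coverage to full coverage.

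For the lower bound I would instead exhibit an uncovered point with high probability. Set $r^{d-1}=c_1^{d-1}\log\rho/\rho$ and place $M\asymp r^{-(d-1)}\asymp \rho/\log\rho$ points $p_1,\dots,p_M$ on the top face $[0,1]^{d-1}\times\{1\}$, pairwise separated by a large multiple of $r$. For the solid ball the set of pairs $(Y,S)$ whose line passes within $r$ of a fixed $p_j$ has measure $O(r^{d-1})$, so $p_j$ is uncovered with probability at least $\exp(-C\rho r^{d-1})=\rho^{-C c_1^{d-1}}$. The crucial point is that any line reaching the top face within the cube after starting in $[0,1]^{d-1}$ must have slope $s_d$ bounded below, because its horizontal excursion $\|s'\|/s_d$ to climb to height $1$ cannot exceed $\sqrt{d-1}+r$; consequently no single line can pass within $r$ of two top-face points separated by a sufficiently large multiple of $r$. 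Hence the covering sets of $p_1,\dots,p_M$ are disjoint regions of the marked starting space, the coverage events are independent, and
\begin{equation*}
    \P\l(\text{all }p_j\text{ covered}\r)\le\l(1-\rho^{-C c_1^{d-1}}\r)^{M}\le \exp\l(-M\rho^{-C c_1^{d-1}}\r)\to 0
\end{equation*}
as soon as $C c_1^{d-1}<1$, i.e. for $c_1$ small. Since coverage of the cube forces coverage of every $p_j$, the cube is uncovered with high probability and $R_\rho(K)\ge c_1(\log\rho/\rho)^{1/(d-1)}$ with high probability.

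The main obstacle is obtaining the two per-point coverage estimates uniformly in $x$, especially near the corners of the top face, where the admissible starting points are squeezed against the boundary of $[0,1]^{d-1}$: \cref{cond:distribution S 2} is exactly what guarantees enough directional mass steering lines inward from every corner, so the lower bound $c\,\rho r^{d-1}$ on the mean survives there. The second delicate point is justifying the independence used in the lower bound, which rests on the geometric fact that lines climbing to the top face have uniformly positive slope; making the separation constant and the resulting disjointness rigorous—including the behaviour of the trajectories above height one—is the part requiring the most care.
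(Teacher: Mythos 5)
Your reduction to the two canonical grains via $B'_{d-1}(o,r_1)\subseteq K\subseteq B_d(o,r_2)$ and your upper bound (grid of cells of side $\asymp r_\rho$, thinning to the directional set supplied by \cref{cond:distribution S 2}, Poisson mean $\gtrsim p_S\,\rho r_\rho^{d-1}$ per cell, union bound over $O(r_\rho^{-d})$ cells) are essentially the paper's argument; the one point to state carefully is that the margin step must be applied only to the thinned lines, whose slope bound $\|s'\|/s_d\le 1/2$ is what lets coverage of a cell centre at radius $r/2$ propagate to the whole cell — the paper sidesteps this by requiring directly that the retained lines traverse two opposite faces of the subcube, which forces the radius-$r_\rho$ cylinder to cover it. Where you genuinely diverge is the lower bound. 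The paper slices the cylinder set at the top face, notes that each cylinder's trace there is contained in a ball of radius $\sqrt d\,r_\rho$ (\cref{prop:lower bound}), and imports the known lower bound for the coverage radius of the $(d-1)$-dimensional Boolean model from \cite{calka_extreme_2014}. You instead place $M\asymp r^{-(d-1)}$ points on the top face at mutual separation $Lr$, bound the measure of each point's covering set in the marked space $[0,1]^{d-1}\times\S^{d-1}_+$ by $O(r^{d-1})$, and obtain independence of the non-coverage events from disjointness of these sets. This is correct and self-contained: the two geometric facts you rely on — that a line from the base passing within $r$ of the top face has $s_d$ bounded below (which also caps the per-direction $Y$-shadow $\kappa_{d-1}r^{d-1}/s_d$ uniformly, so the $O(r^{d-1})$ bound survives arbitrary, even atomic, $\Q$), and that such a line cannot come within $r$ of two top-face points at separation exceeding $(2+2/c_d)r$ — both hold, so the product bound $(1-\rho^{-Cc_1^{d-1}})^M\to0$ goes through once $Cc_1^{d-1}<1$. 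Your route buys a fully elementary lower bound that avoids both the external citation and the time-slice stationarity argument; the paper's route is shorter on the page but leans on \cite{calka_extreme_2014} and on the fact that the slice of the process at any fixed time is again a Boolean model.
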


\paragraph{Brownian motion}
Let $W_t^{(i,\rho)}$ for $i\in \N$ be independent Brownian motions in $\R^{d-1}$ and take
\begin{equation*}
    X_t^{(i,\rho)} = X_0^{(i,\rho)} + W_t^{(i,\rho)}.
\end{equation*}
which tracks the movement of the Brownian motion through time. For this choice of $X_t^{(i,\rho)}$, we concentrate on the telecommunication application and consider $\cX_\rho(B'_{d-1}(o,1))$. 
We then prove the following result for the coverage radius.
\begin{thm}
    Let $X_t^{(i,\rho)} = X_0^{(i,\rho)} + W_t^{(i,\rho)}$. Then there exist constants $c_1$ and $c_2$ such that
    \begin{equation*}
        \P\l(c_1\leq \frac{R_\rho(B'_{d-1}(o,1))}{\l(\log{\rho}/\rho \r)^{1/(d-1)}} \leq c_2\r)\to 1
    \end{equation*}
    as $\rho$ tends to infinity.
    \label{thm:brownian coverage}
\end{thm}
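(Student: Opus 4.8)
The plan is to reduce everything to the \emph{static} coverage of the $(d-1)$-dimensional cube by a Poisson system of balls at a single time, and then upgrade single-time estimates to the continuum of times $t\in[0,1]$ using the regularity of Brownian paths. The starting observation is that, by the displacement theorem for Poisson processes, for each fixed $t$ the positions $\{X_t^{(i,\rho)}\colon i\le\xi_\rho\}$ form a Poisson point process on $\R^{d-1}$ whose intensity at $x$ equals $\rho\int_{[0,1]^{d-1}}p_t(y,x)\dd y$, with $p_t$ the Gaussian transition density, and a point $(x,t)\in[0,1]^{d-1}\times[0,1]$ lies in $\cX_\rho(B'_{d-1}(o,r))$ precisely when some $X_t^{(i,\rho)}$ lies within Euclidean distance $r$ of $x$. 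Thus covering the cube is equivalent to covering $[0,1]^{d-1}$ by radius-$r$ balls simultaneously at every $t\in[0,1]$. For the lower bound I would only use the slice $t=0$: there the positions are exactly the starting points, a homogeneous Poisson process of intensity $\rho$ on $[0,1]^{d-1}$, so covering the base $[0,1]^{d-1}\times\{0\}$ already forces covering $[0,1]^{d-1}$ by radius-$r$ balls around this process, and the classical lower bound for the static Boolean coverage threshold gives $R_\rho(B'_{d-1}(o,1))\ge c_1(\log\rho/\rho)^{1/(d-1)}$ with high probability, exactly as in the line case.

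For the upper bound, fix $r=c_2(\log\rho/\rho)^{1/(d-1)}$ and discretise time into $M=\lceil1/\delta\rceil$ points $t_j=j\delta$ with $\delta=r^2/(C\log\rho)$ for a large $C=C(d)$. The argument rests on two events. First, I would show that with high probability $[0,1]^{d-1}$ is covered by the radius-$(r/2)$ balls around $\{X_{t_j}^{(i,\rho)}\}$ for every $j$; the key input is a uniform lower bound $\rho\int_{[0,1]^{d-1}}p_t(y,x)\dd y\ge c_0\rho$, valid for all $x\in[0,1]^{d-1}$ and $t\in[0,1]$ with $c_0=c_0(d)>0$, which absorbs the boundary and corner effects caused by particles diffusing out of the cube (these regions are the Brownian analogue of the hard-to-cover top corners for lines). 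With this intensity a standard grid argument bounds the probability of missing the cover at a fixed $t_j$ by $\rho^{-K}$ for a $K=K(c_2)$ that grows with $c_2$, so a union bound over the $M=\mathrm{poly}(\rho)$ grid times still tends to $0$. Second, I would control the Brownian modulus of continuity: by the reflection principle and a Gaussian tail bound, the probability that some particle moves more than $r/2$ over some subinterval $[t_j,t_{j+1}]$ is at most $\xi_\rho M\exp(-c(r/2)^2/\delta)=\mathrm{poly}(\rho)\,\rho^{-cC/4}$, which tends to $0$ once $C$ is large. On the intersection of the two good events, for any $(x,t)$ I pick $t_j\le t<t_{j+1}$, find $i$ with $\|X_{t_j}^{(i,\rho)}-x\|\le r/2$, and add $\|X_t^{(i,\rho)}-X_{t_j}^{(i,\rho)}\|\le r/2$ to get $\|X_t^{(i,\rho)}-x\|\le r$; hence $[0,1]^d\subset\cX_\rho(B'_{d-1}(o,r))$ and $R_\rho(B'_{d-1}(o,1))\le c_2(\log\rho/\rho)^{1/(d-1)}$.

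The delicate point, and the one I expect to require the most care, is the quantitative balance hidden in the choice $\delta\asymp r^2/\log\rho$. The time grid must be fine enough that each particle's displacement over a cell stays below $r/2$ across all $\asymp\rho^{1+2/(d-1)}$ particle-interval pairs, which forces $(r/2)^2/\delta\gtrsim\log\rho$; at the same time the number of grid times $M\asymp1/\delta$ must stay polynomial in $\rho$ so that the union bound for the static coverage survives. Both are compatible because $r^2\asymp(\log\rho/\rho)^{2/(d-1)}$ makes $\delta$ polynomially small while keeping $M$ polynomially large, and because the single-time coverage failure probability is super-polynomially small once $c_2$ is large. Establishing the intensity lower bound $c_0\rho$ cleanly and tracking the constants $c_1<c_2$ through the static estimates are the remaining technical points, and neither affects the order $(\log\rho/\rho)^{1/(d-1)}$.
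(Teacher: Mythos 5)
Your proposal is correct, and the lower bound is exactly the paper's route: restrict to the slice $t=0$, where the positions are a homogeneous Poisson process on $[0,1]^{d-1}$, and invoke the static Boolean coverage lower bound (the paper packages this as a general proposition applied with $t=0$). Your upper bound, however, is genuinely different. The paper tiles $[0,1]^d$ by thin space--time boxes $[0,\tfrac{1}{2\sqrt{d-1}}r_\rho]^{d-1}\times[0,r_\rho^2]$ (suitably translated) and asks that a \emph{single} particle cover each box: it enters through the base (intensity $\gtrsim r_\rho^{d-1}\rho$ even in the worst corner, via a Gaussian density lower bound) and then stays within $\tfrac{r_\rho}{2\sqrt{d-1}}$ for a time $r_\rho^2$, an event of constant probability $p^{d-1}$ by Brownian scaling; a union bound over the $\asymp r_\rho^{-(d-1)-2}$ boxes finishes. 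You instead discretise time with mesh $\delta\asymp r^2/\log\rho$, prove static coverage of the full cross-section at each grid time (using the uniform intensity lower bound $c_0\rho$, which plays the role of the paper's corner estimate), and separately control the modulus of continuity of all particles over all cells. Both arguments balance the same polynomial counts against a failure probability $\rho^{-K}$ with $K$ growing in the constant, and neither yields an explicit $c_2$. What the paper's version buys is brevity: scale invariance gives the constant $p$ per box with no union bound over particles and no extra $\log\rho$ in the mesh. What yours buys is modularity: the static coverage step and the continuity step decouple, so the same skeleton applies verbatim to any driving process with a uniform single-time intensity lower bound and sub-Gaussian increments. Two small points to tidy up: replace $\xi_\rho M\exp(-c(r/2)^2/\delta)$ by $\E[\xi_\rho]\,M\exp(\cdot)$ (Mecke) or condition on $\{\xi_\rho\le 2\rho\}$, since $\xi_\rho$ is random; and in the grid argument for event (1) use subcubes of side $r/(4\sqrt{d-1})$ tiling $[0,1]^{d-1}$ rather than balls, so that the intensity lower bound is applied only inside the unit cube.
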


\section{Coverage with uniform directional distribution}
\label{sec:coverage uniform}
Throughout this section, we assume that $K = B_d(o,1)$ and $\Q = 2\sigma_{d-1}$. We prepare for the proof of \cref{thm:coverage uniform} with three lemmas.

If a cylinder of radius $r$ covers a point $x$, the line corresponding to the cylinder crosses a ball of radius $r$ around the cylinder. The following lemma provides the probability that a line through $y\in[0,1]^{d-1}\times\{0\}$ crosses this ball and the cylinder therefore covers $x$.
\begin{lem}[Coverage probability]
    Let $y\in[0,1]^{d-1}$ be a point on the cube's base and $x\in [0,1]^{d}$ a point in the unit cube. Consider $r_\rho>0$ with $r_\rho\to 0$ as $\rho\to\infty$. Let $L_y=\{y\times\{0\}+\alpha S \colon \alpha\in\R\}$ be a random line through $y$ with uniformly distributed $S\in\S^{d-1}$. Then,
    \begin{equation*}
        \P\l(L_y \cap B_d(x,r_\rho) \neq \emptyset\r) =
        \begin{cases}
            \l(\frac{r_\rho}{\|y\times\{0\} -x\|}\r)^{d-1} \frac{2\kappa_{d-1}}{d\kappa_d}
            + \cO(r_\rho^{d+1})  &\text{if } \|y\times\{0\} -x\| > r_\rho
            \\
            1 &\text{otherwise}.
        \end{cases}
    \end{equation*}
    as $\rho\to\infty$, where $\kappa_{n}$ denotes the volume of an $n$-dimensional unit ball. The $\cO(r_\rho^{d+1})$-term is non-negative.
    \label{lem:line crossing probability}
\end{lem}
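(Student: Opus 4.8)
The plan is to reduce the event $\{L_y \cap B_d(x,r_\rho)\neq\emptyset\}$ to the event that the directional variable $S$ falls into a spherical cap, and then to evaluate the normalized area of that cap exactly. Write $p = y\times\{0\}$ and $\delta = \|p - x\|$. If $\delta \leq r_\rho$, then $p$ itself lies in $B_d(x,r_\rho)$, so every line through $p$ meets the ball and the probability equals $1$; this settles the second case. Assume henceforth $\delta > r_\rho$, so that $p$ lies strictly outside the ball. Viewed from $p$, the ball $B_d(x,r_\rho)$ subtends a circular cone of half-angle $\theta$ with $\sin\theta = r_\rho/\delta$. The line $L_y=\{p+\alpha S\colon\alpha\in\R\}$ meets the ball precisely when its undirected direction falls inside this cone, i.e. when $S$ makes an angle at most $\theta$ with either $x-p$ or $-(x-p)$. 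Since $\theta<\pi/2$, these describe two disjoint antipodal caps of half-angle $\theta$ on $\S^{d-1}$, and as $S$ is uniform,
\[
    \P\l(L_y\cap B_d(x,r_\rho)\neq\emptyset\r) = 2\,\sigma_{d-1}(C_\theta),
\]
where $C_\theta$ denotes a cap of half-angle $\theta$.

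Next I would compute the cap area exactly. Using the total surface area $d\kappa_d$ of $\S^{d-1}$ and slicing by the polar angle $\phi$ from the cap axis gives
\[
    \sigma_{d-1}(C_\theta) = \frac{(d-1)\kappa_{d-1}}{d\kappa_d}\int_0^\theta (\sin\phi)^{d-2}\,\dd\phi .
\]
The substitution $u=\sin\phi$ turns this integral into $\int_0^{s} u^{d-2}(1-u^2)^{-1/2}\,\dd u$ with $s=\sin\theta = r_\rho/\delta$. Since $(1-u^2)^{-1/2} = 1 + \cO(u^2)$ and, crucially, $(1-u^2)^{-1/2}\geq 1$ on $[0,s]$, this integral equals $s^{d-1}/(d-1)$ plus a remainder that is simultaneously non-negative and bounded by $C s^{d+1}$ for small $s$. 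Multiplying out, the factor $d-1$ cancels, and inserting $s=r_\rho/\delta$ with $\delta$ a fixed positive constant produces exactly the leading term $\frac{2\kappa_{d-1}}{d\kappa_d}\l(r_\rho/\|y\times\{0\}-x\|\r)^{d-1}$ together with a non-negative $\cO(r_\rho^{d+1})$ remainder.

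The genuinely delicate points are two small ones rather than a single hard obstacle. First, one must correctly identify that a full (undirected) line contributes the factor $2$ from the two antipodal caps, and that the relevant half-angle satisfies $\sin\theta = r_\rho/\delta$ exactly rather than only asymptotically. Second, one must pin down both the order and the sign of the remainder at once. The substitution $u=\sin\phi$ is what makes the latter transparent: the integrand factor $(1-u^2)^{-1/2}$ is manifestly $\geq 1$, which delivers non-negativity for free and reduces the error estimate to the elementary bound $(1-u^2)^{-1/2}-1\leq Cu^2$ on a neighbourhood of $0$. Keeping everything in terms of $\sin\theta$ throughout avoids any need to expand $\arcsin$, so no further care is required to reach the stated $\cO(r_\rho^{d+1})$ control.
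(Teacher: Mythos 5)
Your proof is correct and follows essentially the same route as the paper: reduce the hitting event to the direction $S$ landing in the (double) spherical cap of half-angle $\theta$ with $\sin\theta = r_\rho/\|y\times\{0\}-x\|$, and show that the normalized cap area equals $\frac{\kappa_{d-1}}{d\kappa_d}\sin^{d-1}\theta$ up to a non-negative $\cO(r_\rho^{d+1})$ error. The only cosmetic difference is that you extract the error bound and its sign from the exact integral $\int_0^{\sin\theta}u^{d-2}(1-u^2)^{-1/2}\,\dd u$ via $(1-u^2)^{-1/2}\ge 1$, whereas the paper sandwiches the cap area between $\kappa_{d-1}\sin^{d-1}\theta$ and $\kappa_{d-1}\theta^{d-1}$ and Taylor-expands $\arcsin$.
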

\begin{proof}
    If $\|y\times\{0\} -x\| \leq r_\rho$, then the probability of covering $x$ is trivially one. We therefore only need to prove the first case.
    
    The direction of the line $L_y$ is determined by a uniformly chosen point on $\S^{d-1}_+$. To determine the probability that $L_y$ goes through the ball $B_d(x,r_\rho)$, we need to derive the $(d-1)$-dimensional volume of
    \begin{equation*}
        \{s\in \S^{d-1}_+\colon \{y\times\{0\}+\alpha s\colon \alpha\in\R\}\cap B_d(x,r_\rho) \neq \emptyset\}
    \end{equation*}
    and divide this by the surface area of the $(d-1)$-dimensional unit hemisphere.
    
    We first consider the two-dimensional case, where $y\in[0,1]$ and $x\in[0,1]^2$. Using basic geometric arguments and the Taylor expansion of $\sin^{-1}(x)$ around $x=0$, we can derive
    \begin{align*}
        \P\l(L_y \cap B_d(x,r_\rho) \neq \emptyset\r) &= \frac{2}{\pi}\sin^{-1}\l(\frac{r_\rho}{\|y\times\{0\} -x\|}\r)\\
        &= \frac{2}{\pi}\frac{r_\rho}{\|y\times\{0\} -x\|} + \cO\l(\l(\frac{r_\rho}{\|y\times\{0\} -x\|}\r)^{3}\r).
    \end{align*}

    In higher dimensions, we can use the two-dimensional result to approximate the surface of a spherical cap:
    \begin{multline*}
        \l(\frac{r_\rho}{\|y\times\{0\} -x\|}\r)^{d-1} \kappa_{d-1} \leq d\kappa_d\sigma_{d-1}\l(\{s\in \S^{d-1}_+\colon \{y+\alpha s\colon \alpha\in\R\}\cap B_d(x,r_\rho) \neq \emptyset\}\r) \\
        \leq \l(\sin^{-1}\l(\frac{r_\rho}{\|y\times\{0\} -x\|}\r)\r)^{d-1} \kappa_{d-1}
    \end{multline*}
    so by the Taylor expansion of $\sin^{-1}(x)$,
    \begin{multline*}
        d\kappa_d\sigma_{d-1}\l(\{s\in \S^{d-1}_+\colon \{y+\alpha s\colon \alpha\in\R\}\cap B_d(x,r_\rho) \neq \emptyset\}\r) \\
        = \l(\frac{r_\rho}{\|y\times\{0\} -x\|}\r)^{d-1} \kappa_{d-1} + \cO(r_\rho^{d+1}),
    \end{multline*}
    where the $\cO(r_\rho^{d+1})$-term is non-negative.
    The probability in \cref{lem:line crossing intensity} follows.
\end{proof}

Recall that by the notation introduced in \cref{sec:definitions}, the random lines are denoted by $X^{(i,\rho)}\subset\R^d$.
\begin{lem}[Intensity of cylinders covering point]
    Consider the cylinder model $\cC_{\rho}(r_\rho)$ with $r_\rho\to 0$ as $\rho\to\infty$. Let $x\in [0,1]^{d-1}\times(0,1]$ be a point in the unit square. Then
    \begin{align*}
        \E\sum_{i=1}^{\xi_\rho} \indi{X^{(i,\rho)}\cap B_d(x,r_\rho)\neq \emptyset}
        &= \rho r_\rho^{d-1}\frac{2\kappa_{d-1}}{d\kappa_d}\int_{[0,1]^{d-1}}\frac{1}{\|y\times\{0\} - x\|^{d-1}} \dd y  + \cO(\rho r_\rho^{d+1})\\
        &= \rho r_\rho^{d-1}\frac{2\kappa_{d-1}}{d\kappa_d} \phi_d(x) + \cO(\rho r_\rho^{d+1}),
    \end{align*}
    where the $\cO(\rho r_\rho^{d+1})$-term is non-negative.
    The number of cylinders covering $x$ follows a Poisson distribution with mean given by this expectation.
    \label{lem:line crossing intensity}
\end{lem}
\begin{proof}
    This formula follows from an application of the Mecke formula in combination with \cref{lem:line crossing probability}.

    The starting points in $[0,1]^{d-1}$ and the directions of the lines form a marked Poisson point process on $[0,1]^{d-1}\times \S^{d-1}_+$. Consider
    \begin{equation*}
        A = \{(x,s)\in [0,1]^{d-1}\times \S^{d-1}_+ \colon \{x\times\{0\}+\alpha s\colon s\in\R+\}\cap B_d(x,r_\rho)\neq \emptyset\}.
    \end{equation*}
    Then $\eta_\rho(A) = \sum_{i=1}^{\xi_\rho} \indi{X^{(i,\rho)}\cap B_d(x,r_\rho)\neq \emptyset}$ is Poisson distributed by definition of the Poisson point process.
\end{proof}

\cref{lem:line crossing intensity} can be used to calculate the expected volume of a subset of $[0,1]^{d}$ that remains uncovered.
\begin{lem}[Uncovered volume]
    Let $r_\rho= \l(c\frac{\log \rho}{\rho}\r)^{1/(d-1)}$ for some constant $c>0$ be the cylinder radius at intensity $\rho>0$ and $D\subset [0,1]^{d-1}\times (0,1]$. Then,
    \begin{equation*}
        \E \lambda_{d}\l(D\setminus \cC_{\rho}(r_\rho)\r) = \int_{D} \rho^{-c\frac{2\kappa_{d-1}}{d\kappa_d} \phi_d(x)}\l(1+\cO(r_\rho^{d+1})\r) \dd x
    \end{equation*}
    as $\rho$ tends to infinity.
    \label{lem:uncovered volume}
\end{lem}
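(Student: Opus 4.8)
The plan is to turn the expected uncovered volume into an integral of a pointwise non-coverage probability, recognise that probability as the void probability of a Poisson variable, and then substitute the chosen radius $r_\rho$.

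First I would apply Tonelli's theorem. Writing $\lambda_d\l(D\setminus\cC_\rho(r_\rho)\r)=\int_D \indi{x\notin\cC_\rho(r_\rho)}\dd x$ and taking expectations, the non-negativity of the integrand (and joint measurability of $(x,\omega)\mapsto\indi{x\notin\cC_\rho(r_\rho)}$) lets me interchange expectation and integration, giving
\begin{equation*}
    \E\lambda_d\l(D\setminus\cC_\rho(r_\rho)\r) = \int_D \P\l(x\notin\cC_\rho(r_\rho)\r)\dd x.
\end{equation*}

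Next I would evaluate the pointwise probability $\P(x\notin\cC_\rho(r_\rho))$ for fixed $x\in[0,1]^{d-1}\times(0,1]$ (the base $x_d=0$ is $\lambda_d$-null and may be discarded). By the definition of the cylinder set, $x\in\cC_\rho(r_\rho)$ exactly when some trajectory passes within Euclidean distance $r_\rho$ of $x$, i.e. $X^{(i,\rho)}\cap B_d(x,r_\rho)\neq\emptyset$ for some $i\le\xi_\rho$. Hence the event $x\notin\cC_\rho(r_\rho)$ is precisely the event that the counting variable $\sum_{i=1}^{\xi_\rho}\indi{X^{(i,\rho)}\cap B_d(x,r_\rho)\neq\emptyset}$ vanishes. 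By \cref{lem:line crossing intensity} this variable is Poisson distributed with mean $\mu_\rho(x)=\rho r_\rho^{d-1}\frac{2\kappa_{d-1}}{d\kappa_d}\phi_d(x)+\cO(\rho r_\rho^{d+1})$, so its void probability is $\P(x\notin\cC_\rho(r_\rho))=e^{-\mu_\rho(x)}$.

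Finally I would substitute $r_\rho^{d-1}=c\log\rho/\rho$, so that $\rho r_\rho^{d-1}=c\log\rho$ and the leading exponent becomes $c\log\rho\,\frac{2\kappa_{d-1}}{d\kappa_d}\phi_d(x)$, whence $e^{-\rho r_\rho^{d-1}\frac{2\kappa_{d-1}}{d\kappa_d}\phi_d(x)}=\rho^{-c\frac{2\kappa_{d-1}}{d\kappa_d}\phi_d(x)}$. Factoring out the exponential of the remaining non-negative error term as $1+\cO(r_\rho^{d+1})$ and integrating over $D$ then yields the claimed identity. The step I expect to require the most care is the uniform bookkeeping of this error: since $\phi_d(x)$ blows up as $x$ approaches the base, I would need the $\cO(\rho r_\rho^{d+1})$-control from \cref{lem:line crossing intensity} to be uniform on $D$ (or at least integrable against the leading factor $\rho^{-c\frac{2\kappa_{d-1}}{d\kappa_d}\phi_d(x)}$), so that the correction can legitimately be pulled through the integral as a single multiplicative $(1+\cO(r_\rho^{d+1}))$ term. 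Everything else is a direct consequence of Tonelli's theorem together with the Poisson void-probability identity.
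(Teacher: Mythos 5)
Your proposal is correct and follows essentially the same route as the paper's proof: interchange expectation and integration, apply the Poisson void probability with the mean from \cref{lem:line crossing intensity}, and substitute $\rho r_\rho^{d-1}=c\log\rho$. Your explicit caveat about the uniformity of the $\cO$-term near the base is a point the paper itself passes over silently, so flagging it is a reasonable refinement rather than a deviation.
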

\begin{proof}
    Combining $r= \l(c\frac{\log \rho}{\rho}\r)^{1/(d-1)}$ with \cref{lem:line crossing intensity}, the number of cylinders covering a point $x$ is Poisson distributed with intensity 
    \begin{align*}
        \E\sum_{i=1}^{\xi_\rho} \indi{X^{(i,\rho)}\cap B_d(x,r)\neq \emptyset}&=\rho r^{d-1}\frac{2\kappa_{d-1}}{d\kappa_d} \phi_d(x) + \cO(\rho r^{d+1}) \\
        &= c \log{\rho} \frac{2\kappa_{d-1}}{d\kappa_d} \phi_d(x) + \cO(\rho r^{d+1}).
    \end{align*}
    Then, using the probability that a Poisson-distributed random variable is zero,
    \begin{equation*}
        \P\l(x\notin \cC_{\rho}(r_\rho)\r) = \exp\l(-c \log{\rho} \frac{2\kappa_{d-1}}{d\kappa_d} \phi_d(x)\r)\l(1+ \cO(\rho r^{d+1})\r),
    \end{equation*}
    where the factor $1+ \cO(\rho r^{d+1})$ follows from the Taylor series of $\exp(x)$.
    The expectation can be calculated using the Mecke formula, resulting in the integral over all points $x\in D$.
\end{proof}
We can now use these lemmas to prove \cref{thm:coverage uniform}.

\begin{proof}[Proof of \cref{thm:coverage uniform}]
    For the duration of this proof, let $r$ be of the form
    \begin{equation*}
        r = r_\rho =  \l(c \frac{\log \rho}{\rho}\r)^{1/{(d-1)}}
    \end{equation*}
    for some constant $c>0$, where $r_\rho$ is used when we want to emphasize the dependency on $\rho$. Let
    \begin{equation*}
        c^*\coloneqq \frac{d^2 \kappa_d}{2(d-1)\kappa_{d-1}} \cdot \frac{1}{\inf_{x\in[0,1]^d} \phi_d(x)}
    \end{equation*}
    be the critical value of $c$ we want to derive.    
    To show the convergence in \cref{thm:coverage uniform}, one needs to prove the following:
    \begin{equation*}
        \lim_{\rho\to\infty} \P\l([0,1]^d \subset \cC_{\rho}(r_\rho)\r) =
        \begin{cases}
            0 &\text{if } c < c^*,\\
            1 &\text{if } c > c^*.
        \end{cases}
    \end{equation*}
    
    \paragraph{Lower bound} The lower bound on the constant $c^*$ follows from an application of the second moment method to the non-covered volume. Let $\epsilon>0$ be some arbitrarily small constant and define
    \begin{equation*}
        D_\epsilon = \l\{x\in[0,1]^d\colon \phi_d(x) < \inf_{x\in[0,1]^d} \phi_d(x) + \epsilon\r\}
    \end{equation*}
    and
    \begin{equation*}
        c = \frac{d^2 \kappa_d}{2(d-1)\kappa_{d-1}} \cdot \frac{1}{\inf_{x\in[0,1]^d} \phi_d(x)+2\epsilon}<c^*.
    \end{equation*}
    The set $D_\epsilon$ consists of some points near the corners of $[0,1]^{d-1}\times\{1\}$, since the function $\phi_d(x)$ takes its smallest values there.
    
    We denote the volume of the uncovered subset of $D_\epsilon$ by $V$, so
    \begin{equation*}
        V\coloneqq \lambda_d\l(D_\epsilon\setminus \cC_{\rho}(r_\rho)\r).
    \end{equation*}
    By the second moment method,
    \begin{equation*}
        \P\l(V>0\r) \geq \frac{(\E V)^2}{\E V^2}
    \end{equation*}
    for $V\geq 0$ with finite variance.
    The expectation $\E V$ is given in \cref{lem:uncovered volume}. It remains to show that $\E V^2$ converges to $(\E V)^2$ by finding upper bounds on this expectation.

    The expectation $\E V^2$ can be written as follows:
    \begin{align*}
        \E V^2  &= \int_{D_\epsilon}\int_{D_\epsilon} \P\l(\{x,y\} \not\subset \cC_{\rho}(r_\rho)\r)\dd y \dd x\\
        &= \int_{D_\epsilon}\P\l(x\notin \cC_{\rho}(r_\rho)\r) \int_{D_\epsilon} \P\l(y\notin \cC_{\rho}(r_\rho)| x\notin \cC_{\rho}(r_\rho)\r) \dd y \dd x.
    \end{align*}
    If $x$ and $y$ are close together, their coverage is more correlated than if the distance is large. The following upper bound holds for all $\alpha\geq 0$:
    \begin{multline}
        \E V^2 \leq \int_{D_\epsilon}\int_{B_d\l(x,\rho^\alpha r\r)}\P\l(x\notin \cC_{\rho}(r_\rho)\r) \dd y \dd x \\
        + \int_{D_\epsilon}\int_{D_\epsilon \setminus B_d\l(x,\rho^\alpha r\r)}\P\l(x\notin \cC_{\rho}(r_\rho)\r)\P\l(y\notin \cC_{\rho}(r_\rho)| x\notin \cC_{\rho}(r_\rho)\r) \dd y \dd x.
        \label{eq:Y2 integral division}
    \end{multline}
    In the first integral, all $y$ near $x$ are assumed to be uncovered whenever $x$ is uncovered.
    
    The first summand can be simplified to
    \begin{align*}
        \int_{D_\epsilon} \int_{B_d\l(x,\rho^\alpha r\r)}\P\l(x\notin \cC_{\rho}(r_\rho)\r) \dd y \dd x &= \kappa_d (\rho^\alpha r)^d \int_{D_\epsilon}\P\l(x\notin \cC_{\rho}(r_\rho)\r) \dd x\\
        &= \kappa_d (c\log \rho)^{\frac{d}{d-1}} \rho^{d\alpha - \frac{d}{d-1}} \int_{D_\epsilon} \rho^{-c\frac{2\kappa_{d-1}}{d\kappa_d} \phi_d(x)}(1+\cO(\rho r_\rho^{d+1}))\dd x,
    \end{align*}
    where the integral on the right-hand side equals the expectation $\E V$.
    With $c$ defined as above,
    \begin{equation*}
        c\frac{2\kappa_{d-1}}{d\kappa_d} \phi_d(x) \leq \frac{d}{d-1} \frac{\inf_{y\in[0,1]^d} \phi_d(y) + \epsilon}{\inf_{y\in[0,1]^d} \phi_d(y) + 2\epsilon} < \frac{d}{d-1}
    \end{equation*}
    for all $x\in D_\epsilon$. For
    \begin{equation*}
        \alpha < \frac{1}{d-1}\l(1-\frac{\inf_{y\in[0,1]^d} \phi(y) + \epsilon}{\inf_{y\in[0,1]^d} \phi(y) + 2\epsilon}\r)
    \end{equation*}
    there exists $\rho'>0$ such that
    \begin{equation*}
        \kappa_d (c\log \rho)^{\frac{d}{d-1}} \rho^{d\alpha - \frac{d}{d-1}} < \int_{D_\epsilon} \rho^{-c\frac{2\kappa_{d-1}}{d\kappa_d} \phi_d(x)}\l(1+\cO(\rho r_\rho^{d+1})\r)\dd x
    \end{equation*}
    for all $\rho>\rho'$. Hence,
    \begin{equation*}
        \frac{\int_{D_\epsilon} \int_{B_d\l(x,\rho^\alpha r\r)}\P\l(x\notin \cC_{\rho}(r_\rho)\r) \dd y \dd x}{(\E V)^2} \to 0
    \end{equation*}
    as $\rho\to\infty$.
    
    Having found a range of values for $\alpha$ that makes the first summand vanish, we turn our attention to the second summand of \eqref{eq:Y2 integral division}:
    \begin{equation*}
        \int_{D_\epsilon}\int_{D_\epsilon \setminus B_d\l(x,\rho^\alpha r\r)}\P\l(x\notin \cC_{\rho}(r_\rho)\r)\P\l(y\notin \cC_{\rho}(r_\rho)| x\notin \cC_{\rho}(r_\rho)\r) \dd y \dd x.
    \end{equation*}    
    Let $x,y\in D_\epsilon$ be such that $|x-y| > \rho^{\alpha}r$. The number of cylinders covering $y$, given that $x$ is not covered, follows a Poisson distribution. To calculate the intensity, we first determine the expected number of cylinders covering both $x$ and $y$. By an argument similar to the proofs of \cref{lem:line crossing probability} and \cref{lem:line crossing intensity} (see also \cite[Lemma 3.1]{broman2019random}), there exists some constant $k>0$ such that for large enough $\rho$ and all $x,y\in D_\epsilon$ with $|x-y| > \rho^{\alpha}r$
    \begin{multline*}
        \E\sum_{i=1}^{\xi_\rho} \indi{X^{(i,\rho)}\cap B_d(x,r)\neq \emptyset}\indi{X^{(i,\rho)}\cap B_d(y,r)\neq \emptyset}\\ \leq  \l(\rho r^{d-1}\frac{2\kappa_{d-1}}{d\kappa_d} \phi_d(x) + \cO(\rho r_\rho^{d+1})\r) 
         \l(k \l(\frac{r}{\rho^{\alpha}r}\r)^{d-1}\frac{2\kappa_{d-1}}{d\kappa_d} \r) \eqqcolon A.
    \end{multline*}
    The first factor comes from the expected number of cylinders through $x$. The second factor is an upper bound on the probability that a line through $x$ also crosses $y$. Using the probability of a Poisson distributed random variable being zero, we obtain the following lower and upper bounds:
    \begin{align*}
        \P\l(y\notin \cC_{\rho}(r_\rho)\r) \leq \P\l(y\notin \cC_{\rho}(r_\rho)| x\notin \cC_{\rho}(r_\rho)\r)
        &\leq \P\l(y\notin \cC_{\rho}(r_\rho)\r) e^{A}.
    \end{align*}
    The term $A$ is of the order
    \begin{equation*}
        A = \cO\l((\log \rho ) \rho^{-\alpha(d-1)}\r),
    \end{equation*}
    so $A$ goes to zero as $\rho$ tends to infinity, meaning that
    \begin{equation*}
        \lim_{\rho\to\infty} \frac{\P\l(y\notin \cC_{\rho}(r_\rho)| x\notin \cC_{\rho}(r_\rho)\r)}{\P\l(y\notin \cC_{\rho}(r_\rho)\r)} = 1.
    \end{equation*}
    The coverage of $x$ and of $y$ are therefore sufficiently uncorrelated and the right-hand side of \eqref{eq:Y2 integral division} converges to $(\E V)^2$. It follows that
    \begin{equation*}
        \lim_{\rho\to\infty} \P\l(V>0\r) = 1,
    \end{equation*}
    which completes the proof of the lower bound.
    
    \paragraph{Upper bound} To derive the upper bound, we cover the unit cube with balls of radius $(1-\epsilon)r_\rho$. If any of these balls is not covered by the cylinders, the unit cube is not completely covered. The constant $c^*$ can then be derived using upper bounds on $\P\l([0,1]^d\not\subset \cC_{\rho}(r_\rho)\r)$. 
    
    Let $\epsilon<1/2$ be positive and consider $y\in [0,1]^{d}$. Then, by \cref{lem:line crossing intensity},
    \begin{equation*}
        \E\sum_{i=1}^{\xi_\rho} \indi{X^{(i,\rho)}\cap B_d(y,(1-\epsilon)r)\neq \emptyset} \geq \rho(1-\epsilon)^{d-1}r^{d-1}\frac{2\kappa_{d-1}}{d\kappa_d} \inf_{x\in[0,1]^d}\phi_d(x).
    \end{equation*}
    The probability that the $\epsilon$-ball around $y$ is not covered by the cylinder set is then bounded from above as follows:
    \begin{equation*}
        \P\l(B_d(y,\epsilon r) \not\subset \cC_{\rho}(r_\rho)\r) \leq \exp\l(-\rho(1-\epsilon)^{d-1}r^{d-1}\frac{2\kappa_{d-1}}{d\kappa_d} \inf_{x\in[0,1]^d}\phi_d(x)\r)
    \end{equation*}
    Let $\cI_{\rho,\epsilon}$ be a smallest set $\{y_i\}$ with $y_i\in[0,1]^d$ such that $\cup_i B(y_i,\epsilon r)$ contains $[0,1]^d$. The size of $\cI_{\rho,\epsilon}$ is of order $(\epsilon r)^{-d}$. Then, the union bound and the size of $\cI_{\rho,\epsilon}$ imply that there must exist some constant $k>0$ such that 
    \begin{align*}
        \P\l([0,1]^d\not\subset \cC_{\rho}(r_\rho)\r) &\leq \P\l(\bigcup_{y\in I_{\rho,\epsilon}}\{B(y,\epsilon r)\not\subset \cC_{\rho}(r_\rho)\}\r)\\
        &\leq k(\epsilon r)^{-d} \exp\l(-\rho(1-\epsilon)^{d-1}r^{d-1}\frac{2\kappa_{d-1}}{d\kappa_d} \inf_{x\in[0,1]^d}\phi_d(x)\r).
    \end{align*}
    Plugging in the value for $r$ and simplifying, the expression above becomes
    \begin{multline*}
        \P\l([0,1]^d\not\subset \cC_{\rho}(r_\rho)\r) \leq k\epsilon^{-d}(\log \rho)^{-d/(d-1)} \\
        \times \exp\l(\log \rho\l(\frac{d}{d-1} - c (1-\epsilon)^{d-1}\frac{2\kappa_{d-1}}{d\kappa_d} \inf_{x\in[0,1]^d}\phi_d(x)\r)\r).
    \end{multline*}
    This expression goes to zero whenever
    \begin{equation*}
        c > \frac{d^2 \kappa_d}{2(d-1)\kappa_{d-1}} \cdot \frac{1}{\inf_{x\in[0,1]^d}\phi_d(x)} \cdot \frac{1}{(1-\epsilon)^{d-1}}>c^*.
    \end{equation*}
    Since this must be true for all $\epsilon>0$, it follows that
    \begin{equation*}
        \lim_{\rho\to\infty }\P\l([0,1]^d\not\subset \cC_{\rho}(r_\rho)\r) =0
    \end{equation*}
    whenever $c > \frac{d^2 \kappa_d}{2(d-1)\kappa_{d-1}} \cdot\frac{1}{\inf_{x\in[0,1]^d} \phi_d(x)}=c^*$, which implies
    \begin{equation*}
        \lim_{\rho\to\infty }\P\l([0,1]^d \subset \cC_{\rho}(r_\rho)\r) =1.
    \end{equation*}
    This concludes to proof of the upper bound. The convergence from \cref{thm:coverage uniform} follows.
\end{proof}

\section{Generalizations}
\label{sec:generalization}
To prove \cref{thm:coverage}, we first show a result for the lower bound on $R_\rho(K)$ that applies to generalized cylinder sets based on stochastic processes satisfying some natural conditions.

Note that the distribution of
\begin{equation*}
    \bigcup_{i=1}^\infty X_t^{(i,\rho)} \oplus B'_{d-1}(o,r)
\end{equation*}
is constant in time by \cite[Proposition 1.3 (i)]{vandenberg1997}, meaning it is a Boolean model in $\R^{d-1}$ for all $t$, $r$ and $\rho$. This can be combined with known results on the coverage radius of the Boolean model to show the following proposition.
\begin{prop}[Lower bound]
    Let $X_t^{(i,\rho)}$ be constructed using some continuous-time stochastic process.
    Let $r_\rho = c\l(\frac{\log\rho}{\rho}\r)^{1/(d-1)}$ and suppose that $K$ satisfies for all $c>0$
    \begin{equation*}
        \cX_\rho(r_\rho K)\cap \big([r_\rho,1-r_\rho]^{d-1}\times\{t\}\big) \subset \bigcup_{i=1}^\infty X_t^{(i,\rho)} \oplus B'_{d-1}(o,a r_\rho)
    \end{equation*}
    almost surely for some $a>0$ and $t\in[0,1]$, and $\rho$ large enough. Then there exists a constant $c_1$ such that
    \begin{equation*}
        \P\l(\frac{R_\rho(K)}{\l(\log{\rho}/\rho\r)^{1/(d-1)}}\geq c_1\r) \to 1
    \end{equation*}
    as $\rho$ tends to infinity.
    \label{prop:lower bound}
\end{prop}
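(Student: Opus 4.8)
The plan is to use the containment hypothesis on $K$ to bound the coverage of the $d$-dimensional cube by the coverage of a fixed interior $(d-1)$-cube by a genuine Boolean model, and then to invoke the classical lower bound on the Boolean coverage radius. Fix $c_1>0$, to be taken small at the end, and abbreviate $g_\rho=(\log\rho/\rho)^{1/(d-1)}$. On the event $\{R_\rho(K)<c_1 g_\rho\}$ there is an $r<c_1 g_\rho$ with $[0,1]^d\subset\cX_\rho(rK)$; writing $r=c' g_\rho$ with $c'<c_1$, the slice at the distinguished time $t$ satisfies $[r,1-r]^{d-1}\times\{t\}\subset\cX_\rho(rK)$, and intersecting with this slice and applying the hypothesis gives $[r,1-r]^{d-1}\times\{t\}\subset\bigcup_i X_t^{(i,\rho)}\oplus B'_{d-1}(o,ar)$. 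For $\rho$ large enough that $r<1/4$ we have $[1/4,3/4]^{d-1}\subset[r,1-r]^{d-1}$, and since $ar<ac_1 g_\rho$ and the Boolean model is monotone in its ball radius, projecting out the time coordinate yields that $[1/4,3/4]^{d-1}$ is covered by balls of radius $ac_1 g_\rho$ centred at $\{X_t^{(i,\rho)}\}$. Hence $\P(R_\rho(K)<c_1 g_\rho)$ is at most the probability of this Boolean coverage event, and it remains to force the latter to zero.

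Next I would identify the covering family as a Boolean model of the correct intensity. By the time-stationarity recalled before the statement (\cite[Prop.~1.3(i)]{vandenberg1997}), for each fixed $t$ the points $\{X_t^{(i,\rho)}\}_i$ are the support of a homogeneous Poisson point process on $\R^{d-1}$ of intensity $\rho$. Thus $\bigcup_i X_t^{(i,\rho)}\oplus B_{d-1}(o,\varrho_\rho)$ with $\varrho_\rho:=ac_1 g_\rho$ is an ordinary Boolean model in $\R^{d-1}$ of intensity $\rho$ and ball radius $\varrho_\rho$, and the question becomes the classical one of covering a fixed cube by such a model.

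Finally I would apply the Boolean coverage lower bound. Since $\rho\kappa_{d-1}\varrho_\rho^{d-1}=\kappa_{d-1}(ac_1)^{d-1}\log\rho$, choosing $c_1$ small enough that $\beta:=\kappa_{d-1}(ac_1)^{d-1}<1$ keeps the radius strictly subcritical. For subcritical radii the known coverage results for the Boolean model (e.g.\ \cite{penrose2021}) give non-coverage with probability tending to one; alternatively a direct second-moment argument suffices, placing a grid in $[1/4,3/4]^{d-1}$ with spacing $3\varrho_\rho$, so that each grid point is uncovered with probability $\rho^{-\beta}$, the emptiness events are independent (balls of radius $\varrho_\rho$ around grid points more than $2\varrho_\rho$ apart are disjoint), and the expected number of uncovered grid points grows like $\rho^{1-\beta}/\log\rho\to\infty$, whence an uncovered point exists with probability tending to one. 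Either way the fixed cube fails to be covered with probability tending to one, so $\P(R_\rho(K)<c_1 g_\rho)\to 0$ and therefore $\P(R_\rho(K)\geq c_1 g_\rho)\to 1$.

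I expect the only genuine subtlety to be the application of the Boolean lower bound in the presence of the dilation factor $a$ and over a domain that shrinks slowly with $\rho$. Both are neutralised by passing to the fixed interior cube $[1/4,3/4]^{d-1}$ and by retaining only the leading $\log\rho$ order, so that the precise threshold constant plays no role and only the subcriticality $\kappa_{d-1}(ac_1)^{d-1}<1$---achievable by shrinking $c_1$---matters.
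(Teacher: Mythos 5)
Your reduction is sound and, up to the final step, mirrors the paper's: the paper likewise fixes the time slice $t$, uses the containment hypothesis to dominate the slice coverage by a $(d-1)$-dimensional Boolean model with balls of radius $a r_\rho$ (phrased there as $R_\rho(\tfrac1a K)\geq R'_{\rho,t}$ together with the scaling $R_\rho(\tfrac1a K)=aR_\rho(K)$), and uses the time-stationarity of the point configuration to identify $\{X_t^{(i,\rho)}\}_i$ as a homogeneous Poisson process of intensity $\rho$. Where you genuinely diverge is in how the Boolean non-coverage is then established: the paper simply cites the extreme-value result for the coverage radius of the Boolean model (Calka--Schreiber, Theorem 1, Eq.\ (2c)), which yields a sharp threshold constant, whereas you give a self-contained first-moment-plus-independence argument on a grid of spacing $3\varrho_\rho$ inside the fixed cube $[1/4,3/4]^{d-1}$, using that vacancy events over disjoint balls are independent and that the expected number of vacant grid points is of order $\rho^{1-\beta}/\log\rho\to\infty$ when $\beta=\kappa_{d-1}(ac_1)^{d-1}<1$. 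Your route is more elementary and avoids the external reference at the cost of a non-optimal constant, which is all the proposition asks for. One small point worth tightening: the hypothesis is quantified as ``for all $c>0$ \dots for $\rho$ large enough,'' and you apply it at a radius $r=c'g_\rho$ with $c'$ depending on the realization; it is cleaner to apply the hypothesis once at the deterministic radius $c_1 g_\rho$ (or to note the containment only improves as $c'$ decreases) before passing to the monotone enlargement $B'_{d-1}(o,ac_1 g_\rho)$ of the balls -- the paper glosses over the same issue, so this is a quibble rather than a gap.
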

\begin{proof}
    Remember that for all $t'\in\R_+$ the set $\{X_{t'}^{(1,\rho)}, X_{t'}^{(2,\rho)}, X_{t'}^{(3,\rho)},\dots\}$ forms the support of a homogeneous Poisson point process. Then, for any time $t\in [0,1]$, the random variable
    \begin{equation*}
        R'_{\rho,t} = \inf\l\{r' \colon \cup_{i=1}^\infty X_t^{(i,\rho)} \oplus B'_{d-1}(o,r')\supset [r_\rho,1-r_\rho]^{d-1}\times\{t\}\r\}
    \end{equation*}
    is the coverage radius for the Boolean model. Since $\cX_\rho(r_\rho K)\cap \big([r_\rho,1-r_\rho]^{d-1}\times\{t\}\big) \subset \cup_{i=1}^\infty X_t^{(i,\rho)} \oplus B'_{d-1}(o,a r_\rho)$, it follows that $R_\rho(\frac{1}{a}K) \geq R'_{\rho,t}$. Note that the coverage radius scales as follows:
    \begin{equation*}
        R_\rho(\frac{1}{a}K) = a R_\rho(K).
    \end{equation*}
     
    Hence,
    \begin{align*}
        \P\l(\frac{R_\rho(K)}{\l(\log{\rho}/\rho\r)^{1/(d-1)}}\geq c_1\r) &= \P\l(\frac{R_\rho(\frac{1}{a}K)}{\l(\log{\rho}/\rho\r)^{1/(d-1)}}\geq ac_1\r)\\
        &\geq \P\l(\frac{R'_{\rho,t}}{\l(\log{\rho}/\rho\r)^{1/(d-1)}}\geq ac_1\r).
    \end{align*}
    By \cite[Theorem 1 Eq. (2c)]{calka_extreme_2014}, a constant $c_1>0$ exists such that the probability on the second line converges to one. It follows that for the same $c_1$
    \begin{equation*}
        \P\l(\frac{R_\rho(K)}{\l(\log{\rho}/\rho\r)^{1/(d-1)}}\geq c_1\r) \to 1
    \end{equation*}
    as $\rho$ tends to infinity.
\end{proof}

\subsection{Cylinders generated by lines (\cref{thm:coverage})}
\cref{thm:coverage} is is about cylinders generated by the Minkowski sums between a set of lines and compact sets $K$. The general statement for all $K$ containing $B'_{d-1}(o,r)$ follows from the statement for $B'_{d-1}(o,1)$ and $B_d(o,1)$.
\begin{prop}
    \cref{thm:coverage} holds for $K= B'_{d-1}(o,1)$ and $K= B_d(o,1)$.
    \label{prop:coverage}
\end{prop}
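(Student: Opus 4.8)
The plan is to reduce the two-sided estimate to a single lower bound and a single upper bound by monotonicity in $K$. Since $B'_{d-1}(o,r)\subset B_d(o,r)$, we have $\cC'_\rho(r)\subset\cC_\rho(r)$ for every $r$, hence $R_\rho(B_d(o,1))\le R_\rho(B'_{d-1}(o,1))$ almost surely. It therefore suffices to prove the lower bound $\P(R_\rho(B_d(o,1))\ge c_1(\log\rho/\rho)^{1/(d-1)})\to1$ for the full ball and the upper bound $\P(R_\rho(B'_{d-1}(o,1))\le c_2(\log\rho/\rho)^{1/(d-1)})\to1$ for the flat ball: a lower bound on $R_\rho(B_d(o,1))$ transfers to $R_\rho(B'_{d-1}(o,1))$, and an upper bound on $R_\rho(B'_{d-1}(o,1))$ transfers to $R_\rho(B_d(o,1))$. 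Throughout, for $s\in\S^{d-1}_+$ write $\bar s=(s_1,\dots,s_{d-1})$ and $v(s)=\bar s/s_d$, so that a line reaches height $\tau$ at the spatial position $Y+\tau v(s)$; set $v_i=v(S^{(i,\rho)})$.

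For the lower bound I would invoke \cref{prop:lower bound} with $K=B_d(o,1)$, so the only work is to verify its containment hypothesis for a suitable $t$ and $a$. I would take the top slice $t=1$. Suppose $(z,1)\in\cC_\rho(r_\rho)$ with $z\in[r_\rho,1-r_\rho]^{d-1}$, covered by line $i$. The nearest point of the line to $(z,1)$ occurs at a time $\sigma\in[1-r_\rho,1+r_\rho]$ with $\|z-X^{(i,\rho)}_\sigma\|\le r_\rho$; since $z$ lies in the interior box, this forces $X^{(i,\rho)}_\sigma\in[0,1]^{d-1}$. Because $Y^{(i,\rho)}=X^{(i,\rho)}_0$ also lies in $[0,1]^{d-1}$, the chord $\|\sigma v_i\|=\|X^{(i,\rho)}_\sigma-Y^{(i,\rho)}\|$ is at most the box diameter $\sqrt{d-1}$, so $\|v_i\|\le 2\sqrt{d-1}$ for $\rho$ large (using $\sigma\ge1/2$). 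Transporting back to time $1$ gives $\|z-X^{(i,\rho)}_1\|\le(1+2\sqrt{d-1})\,r_\rho$, which is the required containment with $a=1+2\sqrt{d-1}$ independent of $c$. The key point is geometric: at the top of the cube the demand that a covering line both starts in $[0,1]^{d-1}$ and nearly meets an interior point forces $s^{(i)}_d$ to be bounded away from $0$, so the (generically elongated) elliptical horizontal slices of the tilted cylinders are uniformly comparable to $(d-1)$-balls and the Boolean coverage bound behind \cref{prop:lower bound} applies.

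For the upper bound I would fix $r=(c\log\rho/\rho)^{1/(d-1)}$ and show that for $c$ large $\cC'_\rho((1-\epsilon)r)$ covers $[0,1]^d$ with high probability. The slice of $X^{(i,\rho)}\oplus B'_{d-1}(o,\rho')$ at height $t$ is exactly $B'(X^{(i,\rho)}_t,\rho')$, so covering $(z,t)$ means finding $Y^{(i,\rho)}\in[0,1]^{d-1}$ with $\|Y^{(i,\rho)}+t v_i-z\|\le(1-\epsilon)r$. By the Mecke formula the number of such cylinders is Poisson with mean $\rho\int_{\S^{d-1}_+}\lambda_{d-1}\big([0,1]^{d-1}\cap B_{d-1}(z-t v(s),(1-\epsilon)r)\big)\,\Q(\dd s)$. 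Here \cref{cond:distribution S 2} enters decisively: choosing the sign pattern $(z_1,\dots,z_{d-1})$ according to whether each coordinate of $z$ exceeds $1/2$, every direction $s\in\overline{\cS(z_1,\dots,z_{d-1})}$ has $s_d\ge 2/\sqrt5$ and hence spatial slope $\|\bar s\|/s_d\le1/2$. This threshold is precisely calibrated so that the back-projected centre $z-t v(s)$ remains in $[0,1]^{d-1}$ for all $t\in[0,1]$, whereupon the box intersection is at least the orthant fraction $2^{-(d-1)}\kappa_{d-1}((1-\epsilon)r)^{d-1}$. Integrating over the cap of $\Q$-mass at least $p_S$ gives a mean of at least $C\rho r^{d-1}=Cc\log\rho$, so any fixed point is uncovered with probability at most $\rho^{-Cc}$; a union bound over an $\epsilon r$-net of cardinality $O((\epsilon r)^{-d})=\rho^{d/(d-1)+o(1)}$ then closes the argument once $Cc>d/(d-1)$, yielding $c_2$.

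The one genuinely non-routine point in the net step is that horizontal dilation by $B'_{d-1}$ does not thicken the cylinders vertically, so proximity in the time coordinate is not absorbed by enlarging $r$. I would resolve this using the bounded spatial speed $\|v_i\|\le1/2$ of the $\cS$-cylinders employed above: a cylinder covering $(z,t)$ at radius $(1-\epsilon)r$ then also covers every $(z',t')$ with $\|z'-z\|\le\epsilon r/2$ and $|t'-t|\le\epsilon r$ at radius $r$, so an ordinary $d$-dimensional net is legitimate. The main obstacle overall is the lower bound for the full ball: one must recognise that \cref{prop:lower bound} fails at a generic slice, where nearly horizontal cylinders produce arbitrarily long slices and force $a\to\infty$, and that the top slice $t=1$ is the right choice precisely because the cube geometry itself then supplies the missing lower bound on $s_d$.
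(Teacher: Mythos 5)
Your proposal is correct and follows essentially the same route as the paper: the same monotonicity reduction to a lower bound for $B_d(o,1)$ and an upper bound for $B'_{d-1}(o,1)$, the same verification of the hypothesis of \cref{prop:lower bound} at the top slice (where the cube geometry forces $s_d$ bounded below, yielding a constant $a$; the paper gets $a=\sqrt{d}$, you get $1+2\sqrt{d-1}$), and the same thinning to a direction cap from \cref{cond:distribution S 2} plus a Poisson first-moment and union bound for the upper bound. The only cosmetic difference is that the paper runs the union bound over subcubes of side $r_\rho$ fully traversed by a single line, whereas you run it over a point net and separately handle the lack of vertical thickening via the slope bound $\|\bar s\|/s_d\le 1/2$; both give the same order of constants.
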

We first show how this Proposition can be used to prove \cref{thm:coverage}.
\begin{proof}[Proof of \cref{thm:coverage}]
    Since $K$ is a compact set containing $B'_{d-1}(o,r)$ for some $r>0$, we can choose $r_1,r_2>0$ such that
    \begin{align*}
        B'_{d-1}(o,r_1) \subseteq K \subseteq B_d(o,r_2)
    \end{align*}
    from which
    \begin{equation}
        R_\rho\l(B_d(o,r_2)\r) \leq R_\rho\l(K\r) \leq R_\rho\l(B'_{d-1}(o,r_1)\r).
        \label{eq:radius bounds}
    \end{equation}
    follows. By \cref{prop:coverage}, there exist $c_1$ and $c_2$ such that
    \begin{align*}
        \P\l(\frac{R_\rho\l(B_d(o,r_2)\r)}{\l(\log{\rho}/\rho\r)^{1/(d-1)}}\geq c_1\r) &\to 1\\
        \P\l(\frac{R_\rho\l(B'_{d-1}(o,r_1)\r)}{\l(\log{\rho}/\rho\r)^{1/(d-1)}}\leq c_2\r) &\to 1.
    \end{align*}
    \cref{thm:coverage} then follows from combining these probabilities with \eqref{eq:radius bounds}.
\end{proof}

\begin{proof}[Proof of \cref{prop:coverage}]
    In what follows, we consider $r_\rho$ of the form
    \begin{equation*}
        r_\rho = c\l(\frac{\log{\rho}}{\rho}\r)^{1/(d-1)}
    \end{equation*}
    for some constant $c>0$. Our goal is to show that there exist $c_1$ and $c_2$ such that
    \begin{align}
        \lim_{t\to\infty}\P\l(R_\rho < r_\rho\r) = 0 \qquad &\text{if } c < c_1
        \label{eq:lower bound}
        \\
        \lim_{t\to\infty}\P\l(R_\rho > r_\rho\r) = 0 \qquad &\text{if } c > c_2.
        \label{eq:upper bound}
    \end{align}
    
    Note that
    \begin{equation*}
        B'_{d-1}(o,1) \subset B_d(o,1),
    \end{equation*}
    which implies
    \begin{equation*}
        R_\rho\l(B'_{d-1}(o,1)\r) \geq R_\rho\l(B_d(o,1)\r).
    \end{equation*}
    Therefore, it is enough to derive the existence of the lower bound $c_1$ for $R_\rho\l(B_d(o,1)\r)$ and the upper bound for $R_\rho\l(B'_{d-1}(o,1)\r)$.

    \paragraph{Lower bound (Eq. \eqref{eq:lower bound})} For convenience of notation, let $R_\rho\coloneqq R_\rho\l(B_d(o,1)\r)$. Then,
    \begin{align*}
        \P\l(R_\rho < r_\rho\r) &\leq \P\l([r_\rho,1-r_\rho]^d\subset \cC_{\rho}(r_\rho)\r)\\
        & \leq \P\l([r_\rho,1-r_\rho]^{d-1}\times \{1\} \subset \cC_{\rho}(r_\rho)\r).
    \end{align*}
    We will therefore focus on covering the `upper' side of the unit cube.
    
    Suppose that $\ell=\{x+\alpha s\colon\, \alpha\in\R\}$ with $s\in\S^{d-1}$ is a line through $x\in[0,1]^{d-1}\times \{0\}$ that intersects some $y\in[r_\rho,1-r_\rho]^{d-1}\times \{1\}$. The $d$-th entry of $s$ must be at least $1/\sqrt{d}$ since the distance between two opposite corners of $[0,1]^d$ is $\sqrt{d}$. The probability of $s_d>1/\sqrt{d}$ is positive by \cref{cond:distribution S 2}. This bound on the angle gives us an upper bound on the volume of $[0,1]^{d-1}\times \{1\}$ covered by the cylinder:
    \begin{equation*}
        \Big(\ell\oplus r_\rho B_d(o,1)\Big)\cap \l([0,1]^{d-1}\times \{1\}\r) \subset B_{d-1}\l(y,\sqrt{d}r_\rho\r).
    \end{equation*}
    That is, the subset of $[0,1]^{d-1}\times\{1\}$ covered by the cylinder induced by $\ell$ is contained in a ball of radius $\sqrt{d} r_\rho$.

    Since this is true for any of the random cylinders in the cylinder set, we see that the condition in \cref{prop:lower bound} is satisfied. The lower bound \eqref{eq:lower bound} follows.
    
    \paragraph{Upper bound (Eq. \eqref{eq:upper bound})} 
    For the remainder of the proof, we let $R_\rho \coloneqq R_\rho(B'_{d-1}(o,1))$.
    Let us cover $[0,1]^d$ by $k_\rho=\ceil{r_\rho^{-1}}^d$ closed $d$-dimensional hypercubes $\{Q_i^{\rho}\}_{i\in[k_\rho]}$ of side length $r_\rho$ such that
    \begin{equation*}
        \bigcup_{i=1}^{k_\rho} Q_i^{\rho} = [0,1]^d.
    \end{equation*}
    Then,
    \begin{align*}
        \P\l(R_\rho> r_\rho\r) &= \P\l([0,1]^d\not\subset \cC'_{\rho,r_\rho}\r)\\
        &\leq \P\l(\bigcup_{i=1}^{k_\rho}\l\{Q_i^{\rho}\not\subset \cC'_{\rho,r_\rho}\r\}\r)\\
        &\leq \sum_{i=1}^{k_\rho} \P\l(Q_i^{\rho}\not\subset \cC'_{\rho,r_\rho}\r).
    \end{align*}
    We therefore need to find an upper bound on the probability of a cube not being covered by a cylinder.
    
    For $a_1,\dots,a_{d-1},h\in[0,1-r_\rho]^d$, let
    \begin{equation*}
        Q^{h} \coloneqq [a_1,a_1+r_\rho]\times[a_2,a_2+r_\rho]\times\cdots\times[a_{d-1}, a_{d-1}+r_\rho]\times [h,h+r_\rho]
    \end{equation*}
    be one of the hypercubes of side length $r_\rho$. Note that for any line intersecting two opposite sides of the cube $Q^h$, the corresponding cylinder fully covers the cube. We will find upper bounds for $\P(Q^h\not\subset \cC'_{\rho,r_\rho})$ by thinning the Poisson process of lines. \cref{fig:coverage proof} provides a sketch of the situation described below in two dimensions.
    \begin{figure}
        \centering
        \includegraphics[width=.4\textwidth]{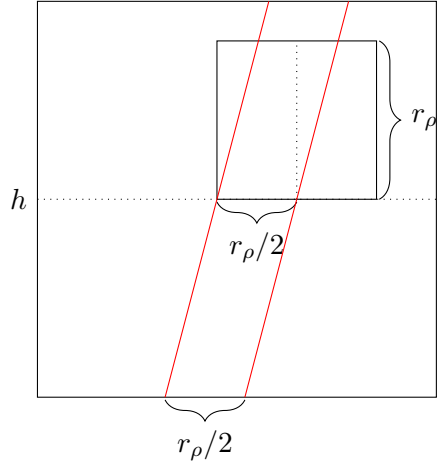}
        \caption{Illustration of the situation in the proof of \cref{prop:coverage}. A subcube is placed at height $h$. The right half of the subcube is completely contained in $[1/2,1]\times[0,1]$ Lines with a suitable direction intersecting the left half of this subcube exit through the top of the cube. When the direction of the lines is fixed, lines that intersect the left half of the small box must originate from an interval of length $r_\rho/2$ on the bottom side of $[0,1]^2$.}
        \label{fig:coverage proof}
    \end{figure}
    
    Let us divide $[a_1,a_1+r_\rho]\times\cdots\times[a_{d-1}, a_{d-1}+r_\rho]$ into $2^{d-1}$ subcubes with side length $r_\rho/2$ and $[0,1]^{d-1}$ into $2^{d-1}$ subcubes with side length $1/2$. Then one of the former subcubes will be completely contained in one of the subcubes with side length $1/2$. Without loss of generality, we may assume that
    \begin{equation*}
        [a_1+r_\rho/2,a_1+r_\rho]\times[a_2+r_\rho/2,a_2+r_\rho]\times\cdots\times[a_{d-1}+r_\rho/2, a_{d-1}+r_\rho] \subset [1/2,1]^{d-1}.
    \end{equation*}
    Consider $s\in\cS(1,1,\dots,1)$, where $\cS$ was defined in \eqref{eq:direction set}. Any line $L=\{x+\alpha s\colon \alpha\in\R\}$ that intersects $[a_1, a_1+r_\rho/2]\times \cdots \times [a_{d-1}, a_{d-1}+r_\rho/2]\times\{h\}$, also intersects $[a_1, a_1+r_\rho]\times \cdots \times [a_{d-1}, a_{d-1}+r_\rho]\times\{h+r_\rho\}$ and $[0,1]^{d-1}\times\{0\}$. That is, these lines intersect two opposite sides of the cube and also originate in $[0,1]^{d-1}\times\{0\}$.
    
    Based on this information, we remove every point $(x,s)$ from the process $\tilde\zeta$ when the corresponding line $\{x+\alpha s\colon \alpha\in\R\}$ does not intersect $[a_1, a_1+r_\rho/2]\times \cdots \times [a_{d-1}, a_{d-1}+r_\rho/2]\times\{h\}$ or when $s\notin \cS(1,1,\dots,1)$. All cylinders corresponding to the remaining lines cover $Q^h$. Let
    \begin{equation*}
        p_s \coloneqq \min \l\{\P\l(S_i\in \overline{\cS\l(z_1,\dots,z_{d-1}\r)}\r)\colon\, (z_1,\dots,z_{d-1})\in \{-1,1\}^{d-1}\r\},
    \end{equation*}
    which is positive since \cref{cond:distribution S 2} is satisfied. Using a similar argument as in the proof of \cref{lem:line crossing intensity} (also shown in \cref{fig:coverage proof}), one can show that the number of lines left is Poisson distributed with mean
    \begin{equation*}
        \lambda_{Q^h} \geq p_s \l(\frac{r_\rho}{2}\r)^{d-1} \rho.
    \end{equation*}
    Using this intensity bound, we obtain the following upper bound on the probability that no line intersects $Q^h$:
    \begin{align*}
        \P(Q^h\not\subset \cC'_{\rho,r_\rho}) &\leq \exp\l(-p_s \l(\frac{r_\rho}{2}\r)^{d-1} \rho\r)\\
        &= \exp\l(-2^{-(d-1)}p_s c^{d-1} \log{\rho}\r).
    \end{align*}
    Here, the factor $p_s$ comes from removing cylinders based on the direction and $(r_\rho/2)^{d-1}$ is the part of $Q^h$ the cylinders need to cross.
    
    The number of squares covering $[0,1]^d$ is bounded from above by $k_1 r_\rho^{-d}$ for some constant $k_1$. Summing over all squares results in the following upper bound:
    \begin{align*}
        \P\l(R_\rho\geq r_\rho\r) &\leq k_1 r_\rho^{-d}\exp\l(-2^{-(d-1)}p_s c^{d-1} \log{\rho}\r)\\
        &= k_1 \exp\l(-d\log{r_\rho}-2^{-(d-1)}p_s c^{d-1} \log{\rho}\r)\\
        &= k_2 \exp\l(d\l(\log{\rho}-\log{\log{\rho}}\r)-2^{-(d-1)}p_s c^{d-1} \log{\rho}\r),
    \end{align*}
    where $k_2>0$ is a constant. For $c^{d-1}>2^{d-1}d/p_s$, this upper bounds converges to zero as $\rho$ goes to infinity. Hence, for
    \begin{equation*}
        c > 2\l(\frac{d}{p_s}\r)^{1/(d-1)}
    \end{equation*}
    we have
    \begin{equation*}
        \P\l(\frac{R_\rho}{\l(\log{\rho}/\rho\r)^{1/(d-1)}}\leq c\r)\to 1 \qquad \text{as } \rho\to\infty,
    \end{equation*}
    which completes the proof for the upper bound.
\end{proof}

\subsection{Cylinders generated by Brownian motions (\cref{thm:brownian coverage})}
For the Brownian motion, we only consider the generalized cylinders created using the Minkowski sum with $K=B'(o,r)$. The proof can again be divided into a proof of a lower and an upper bound.

\begin{proof}[Proof of \cref{thm:brownian coverage}]
    We again consider $r_\rho$ of the form
    \begin{equation*}
        r_\rho = c\l(\frac{\log{\rho}}{\rho}\r)^{1/(d-1)}
    \end{equation*}
    for some constant $c>0$ and show limits of the form \eqref{eq:lower bound} and \eqref{eq:upper bound}. The lower bound \eqref{eq:lower bound} follows from an application of \cref{prop:lower bound} with $t=0$. In this proof, we let
    \begin{equation*}
        \cW_\rho(r) \coloneqq \cX_\rho(B'_{d-1}(o,1)).
    \end{equation*}
    denote the generalized cylinder set generated using the Brownian motions with $B'_{d-1}(o,r)$

    To prove the upper bound, we use the same idea as in the proof of \cref{prop:coverage}: we cover the unit cube by smaller sets and show an upper bound on the probability that at least one of them is not covered. We adapt the size of these boxes to apply the scaling property of Brownian motion.

    Let $Q_1,Q_2,\dots Q_{k_\rho}$ be $k_\rho$ translations of $Q = [0,\frac{1}{2\sqrt{d-1}}r_\rho]^{d-1}\times [0,r_\rho^2]$ covering the unit cube. Then there exists a constant $k>0$ such that $k_\rho\leq k r_\rho^{-(d-1)}r_\rho^{-2}$ for all $\rho$ large enough, so
    \begin{align*}
        \P\l([0,1]^d\not\subset \cW_\rho(r_\rho)\r) &\leq \sum_{i=1}^{k_\rho} \P\l(Q_i\not\subset \cW_\rho(r_\rho)\r)\\
        &\leq k r_\rho^{-(d-1)}r_\rho^{-2} \sup_{i\in[k_\rho]}\P\l(\nexists j \colon Q_i\subset W^{(j)} \oplus B'_{d-1}(o,r)\r),
    \end{align*}
    where the probability on the second line is the probability of the event that no
    single Brownian motion covers the entire cube $Q_i$. Similarly to the proof of \cref{prop:coverage}, we bound this probability by removing all stochastic processes $X_t^{(i,\rho)}$ from the process that do not satisfy two convenient conditions: they must enter $Q_i$ through its $(d-1)$-dimensional base and cover the cube by moving at most $\frac{1}{2\sqrt{d-1}}r$ into any direction for a duration of $r_\rho^2$. By the same argument as in the proof of \cref{lem:line crossing intensity}, the number of $X_t^{(i,\rho)}$ satisfying this condition is Poisson distributed.
    
    The probability that a one-dimensional Brownian motion $W_s$ with $W_0 = 0$ stays in the interval $[-\frac{r}{2\sqrt{d-1}}, \frac{r}{2\sqrt{d-1}}]$ for all $s\in[0,r^2]$ is
    \begin{align*}
        \P\l(W_s\in \l[-\frac{r}{2\sqrt{d-1}}, \frac{r}{2\sqrt{d-1}}\r]\, \forall s\in[0,r^2]\r) &= 1 - \P\l(\sup_{s\in [0,r^2]} |W_s| > \frac{r}{2\sqrt{d-1}}\r)\\
        &=1 - \P\l(\sup_{s\in [0,1]} |W_s| > \frac{1}{2\sqrt{d-1}}\r)\\
        &\eqqcolon p>0,
    \end{align*}
    where we used the scaling property of Brownian motions to get the second equality.
    For a $(d-1)$-dimensional Brownian motion with position described by $(d-1)$ independent Brownian motions in each component, the probability that the Brownian motion moves at most $\frac{1}{2\sqrt{d-1}}r$ into any direction is then $p^{d-1}$.
    
    The intensity of $X_t^{(i,\rho)}$ crossing a cube $Q_i$ is the lowest when $Q_i$ lies in an upper corner of $[0,1]^d$. If we define $Q_{\max}= [0,\frac{1}{2\sqrt{d-1}}r_\rho]^{d-1}\times [1-r_\rho^2,1]$, then 
    the expected number of processes hitting $[0,\frac{1}{2\sqrt{d-1}}r_\rho]^{d-1}$ at time $1-r_\rho^2$ is bounded from below by
    \begin{multline*}
        \E\l(\sum_{i=1}^{\xi_\rho} \ind\l(X_{1-r_\rho^2}^{(i,\rho)}\in [0,\frac{1}{2\sqrt{d-1}}r_\rho]^{d-1}\r)\r) \\
        \geq \l(\frac{1}{2\sqrt{d-1}}r_\rho\r)^{d-1}\l(\frac{1}{2\pi}\r)^{\frac{d-1}{2}} \exp\l(-\frac{1}{2}(d-1)^2\r)\rho,
    \end{multline*}
    where we used the density function of the normal distribution to bound the intensity of Brownian motions in an upper corner of the cube. Then    
    \begin{align*}
        \P\l(Q_i\not\subset \cW_\rho(r_\rho)\r) &\leq \P\l(Q_{\max}\not\subset \cW_\rho(r_\rho)\r)\\
        &\leq \exp\l(-\l(\frac{1}{2\sqrt{d-1}}r_\rho\r)^{d-1}\l(\frac{1}{2\pi}\r)^{\frac{d-1}{2}} \exp\l(-\frac{1}{2}(d-1)^2\r)\rho p^{d-1}\r).
    \end{align*}
    Plugging in the expression for $r_\rho$, this simplifies to
    \begin{align*}
        \P\l(Q_i\not\subset \cW_\rho(r)\r) &\leq \exp\l(-c^{d-1}\l(\frac{1}{2\sqrt{d-1}}\r)^{d-1}\log \rho\l(\frac{1}{2\pi}\r)^{\frac{d-1}{2}} \exp\l(-\frac{1}{2}(d-1)^2\r) p^{d-1}\r)\\
        &= \exp\l(-c_d \log \rho \r) =\rho^{-c_d},
    \end{align*}
    where $c_d \coloneqq c^{d-1} \l(\frac{1}{2\sqrt{d-1}}\r)^{d-1}\l(\frac{1}{2\pi}\r)^{\frac{d-1}{2}} \exp\l(-\frac{1}{2}(d-1)^2\r) p^{d-1}$.
    Hence,
    \begin{align*}
        \sum_{i=1}^{k_\rho} \P\l(Q_i\not\subset \cW_\rho(r_\rho)\r) &\leq k r_\rho^{-(d-1)} r_\rho^{-2} \rho^{-c_d}\\
        &= \frac{k}{c^{d+1}(\log \rho)^{1+\frac{2}{d-1}}}
        \rho^{-c_d + 1 + \frac{2}{d-1}}.
    \end{align*}
    For $c$ (and implicitly $c_d$) large enough, this expression tends to zero, meaning that $\P\l([0,1]^d\not\subset \cW_\rho(r_\rho)\r)$ goes to zero as $\rho$ tends to infinity. This completes the proof of the upper bound.
\end{proof}

\begin{rem}
    The proofs of the upper bounds in \cref{prop:coverage} and \cref{thm:brownian coverage} both rely on the idea of covering the unit cube by smaller cubes. This could naturally be extended to other stochastic processes: if there is a non-zero intensity of stochastic processes everywhere in $[0,1]^d$ and these processes tend to not move too fast, then the upper bound follows.

    A possible extension would be the Brownian motion with linear drift, where $X_t^{(i,\rho)} = X_0^{(i,\rho)} + W_t^{(i,\rho)} + tV$ for some random velocity vector $V\in \R^{d-1}$. This requires some adjustments to the proof above. Instead of requiring that a Brownian motion moves at most $\frac{r_\rho}{2}\sqrt{d-1}$ in a time interval of length $r_\rho^2$, we restrict ourselves to cylinders such that $W_t^{(i,\rho)}$ moves at most $\frac{r_\rho}{4}\sqrt{d-1}$ in an interval of length $r_\rho^2$ and $\|Vr_\rho^2\|\leq \frac{r_\rho}{4}\sqrt{d-1}$ too. This leads to similar bounds as in the proof above.
\end{rem}

\section*{Funding information}
Funded by the Deutsche Forschungsgemeinschaft (DFG, German Research Foundation) – Project-ID 531542011.

\addcontentsline{toc}{section}{References}
\bibliographystyle{abbrv}

\bibliography{ref}
\end{document}